\definecolor{bgcolor}{rgb}{0.8,1,1}
\definecolor{bgcolor2}{rgb}{0.8,1,0.8}
\definecolor{niceblue}{rgb}{0.0,0.19,0.56}
\newcommand{\R}{\mathbb{R}}
\newcommand{\eqdef}{\stackrel{\text{def}}{=}}
\def\<#1,#2>{\left\langle #1,#2\right\rangle}
\newtheorem{assumption}{Assumption}
\newcommand{\algname}[1]{{\sf  #1}\xspace}
\newcommand{\cB}{{\cal B}}
\newcommand{\cG}{{\cal G}}
\newcommand{\cO}{{\cal O}}
\newcommand{\mA}{{\bf A}}
\newcommand{\EE}{\mathbb{E}}
\newcommand{\NN}{\mathbb{N}}
\begin{document}
\title{Byzantine-Robust Loopless Stochastic Variance-Reduced Gradient
}
%
%
\author{Nikita Fedin\inst{1}
\and
Eduard Gorbunov\inst{2}
}
\authorrunning{N.~Fedin \& E.~Gorbunov}
%
\institute{Moscow Institute of Physics and Technology, Russia\\ \email{fedin.ng@phystech.edu
} \and
Mohamed bin Zayed University of Artificial Intelligence, UAE
\email{eduard.gorbunov@mbzuai.ac.ae}}
\maketitle              
\begin{abstract}
Distributed optimization with open collaboration is a popular field since it provides an opportunity for small groups / companies / universities, and individuals to jointly solve huge-scale problems. However, standard optimization algorithms are fragile in such settings due to the possible presence of so-called Byzantine workers -- participants that can send (intentionally or not) incorrect information instead of the one prescribed by the protocol (e.g., send anti-gradient instead of stochastic gradients). Thus, the problem of designing distributed methods with provable robustness to Byzantine workers has been receiving a lot of attention recently. In particular, several works consider a very promising way to achieve Byzantine tolerance via exploiting variance reduction and robust aggregation. The existing approaches use \algname{SAGA}- and \algname{SARAH}-type variance reduced estimators, while another popular estimator -- \algname{SVRG} -- is not studied in the context of Byzantine-robustness. In this work, we close this gap in the literature and propose a new method -- Byzantine-Robust Loopless Stochastic Variance Reduced Gradient (\algname{BR-LSVRG}). We derive non-asymptotic convergence guarantees for the new method in the strongly convex case and compare its performance with existing approaches in numerical experiments.

\keywords{Distributed optimization  \and Byzantine-robustness \and Variance reduction \and Stochastic optimization.}
\end{abstract}
\section{Introduction}
In this work, we consider a finite-sum minimization problem
\begin{equation}
    \min\limits_{x \in \R^d}\left\{f(x) = \frac{1}{m} \sum\limits_{j=1}^m f_j(x)\right\}. \label{eq:main_problem}
\end{equation}
Such problem formulations are very typical for machine learning tasks \cite{shalev2014understanding,goodfellow2016deep}, where $x$ represents the model parameters and $f_j(x)$ denotes the loss on the $j$-th element of the dataset. In modern problems of this type, the dataset size $m$ and the dimension of the problem $d$ are typically very large, e.g., several billion \cite{ouyang2022training}. Training of such models on $1$ (even very powerful) machine can take years of computations \cite{lidemystifying}. Therefore, it is inevitable to use distributed (stochastic) approaches to solve such complicated problems, e.g., Parallel Stochastic Gradient Descent (\algname{Parallel-SGD}) \cite{robbins1951stochastic,zinkevich2010parallelized}.

Distributed optimization is associated with a number of difficulties related to communication efficiency, data privacy, asynchronous updates, and many other aspects that depend on the setup. One of such aspects is the robustness to \emph{Byzantine workers}\footnote{This term takes its origin in \cite{lamport1982byzantine} and has become standard in the literature \cite{lyu2022privacy}. By using this term, we do not want to offend any group of people but rather follow standard notation for the community.} -- the workers that can (intentionally or not) deviate from the prescribed protocol and are assumed to be omniscient (see more details in Section~\ref{sec:technical_prelim}). Byzantine workers can easily destroy the convergence of standard methods based on simple averaging since such workers can send to the server arbitrary vectors. This fact justifies the usage of special methods that are robust to Byzantine attacks.

In particular, one of the existing techniques to achieve Byzantine-robustness is based on the variance reduction mechanism \cite{gower2020variance}. The key idea behind this approach is based on the fact that variance reduction of stochastic gradients received from regular workers reduces the strengths of Byzantine attacks since it becomes harder to ``hide in the noise'' for Byzantine workers and easier for the server to reduce the effect of Byzantine attacks. This idea led to the development of such variance-reduced Byzantine-robust methods as \algname{Byrd-SAGA} \cite{wu2020federated}, which uses celebrated \algname{SAGA} estimator \cite{defazio2014saga} and geometric median for aggregation, and \algname{Byz-VR-MARINA} \cite{gorbunov2022variance}, which is based on \algname{SARAH} estimator \cite{nguyen2017sarah} and any agnostic robust aggregation \cite{karimireddy2021byzantine}. However, there exists no Byzantine-robust version of another popular variance-reduced method called -- Stochastic Variance-Reduced Gradient (\algname{SVRG}) \cite{johnson2013accelerating}. Moreover, in view of the vulnerability of geometric median to special Byzantine attacks \cite{baruch2019little,xie2020fall,karimireddy2021learning}, it remains unclear how unbiased variance-reduced estimators (like \algname{SVRG}/\algname{SAGA}-estimators) behave in combination with provably robust aggregation rules \cite{karimireddy2021learning,karimireddy2021byzantine} -- the authors \cite{gorbunov2022variance} focus on biased variance reduction only.

\noindent \textbf{Contributions.} We propose a new method called Byzantine-Robust Loopless Stochastic Variance-Reduced Gradient (\algname{BR-LSVRG}) that uses \algname{SVRG}-estimator and (provably) robust aggregation rule. We analyze the method for solving smooth strongly convex distributed optimization problems and prove its theoretical convergence. Though our results require the usage of large enough batchsizes, we show that in certain scenarios \algname{BR-LSVRG} has better convergence guarantees than both \algname{Byrd-SAGA} and \algname{Byz-VR-MARINA}. In addition, we study the convergence of \algname{BR-LSVRG} in several numerical experiments and observe that (i) \algname{BR-LSVRG} can reach a good accuracy of the solution even with small batchsize and (ii) \algname{BR-LSVRG} converges better than \algname{Byrd-SAGA}.

\subsection{Technical Preliminaries}\label{sec:technical_prelim}

\paragraph{Notation.} We denote the standard Euclidean inner product in $\R^d$ as $\langle x, y \rangle \eqdef \sum_{i=1}^d x_iy_i$, where $x = (x_1,\ldots,x_d)^\top, y = (y_1,\ldots,y_d)^\top \in \R^d$ and $\ell_2$-norm as $\|x\| \eqdef \sqrt{\langle x, x \rangle}$. For any integer $t > 0$ we use $[t]$ to define set $\{1,2,\ldots,t\}$. Finally, $\EE[\cdot]$ denotes full expectation and $\EE_k[\cdot]$ denotes the expectation w.r.t.\ the randomness coming from iteration $k$.

\paragraph{Byzantine workers.} We assume that the distributed system consists of $n$ workers $[n]$ connected with parameter-server. Each worker can compute gradients of $\nabla f_j(x)$ for any $j \in [m]$ and $x \in \R^d$. Moreover, we assume that workers consist of two groups: $[n] = \cG \cup \cB$, $\cG \cap \cB = \varnothing$. Here $\cG$ denotes the set of regular workers, and $\cB$ is the set of so-called \emph{Byzantine workers}, i.e., the workers that can (intentionally or not) send arbitrary vectors to the server instead of ones prescribed by the algorithm. Moreover, following the classical convention, we assume that Byzantine workers can be omniscient, meaning that they can know exactly what other workers send to the server and what aggregation rule the server uses. Although this assumption is strong, it is quite popular due to the following argument: if the method is robust to the presence of Byzantine workers in these settings, this method is guaranteed to be robust in scenarios when Byzantine workers are less harmful. In addition, one has to assume that $|\cG| = G \geq (1-\delta)n$ or equivalently $|\cB| = B \leq \delta n$ for some $\delta < \nicefrac{1}{2}$ (otherwise Byzantine workers form a majority and provable Byzantine-robustness cannot be achieved in the worst case \cite{karimireddy2021byzantine}).

\paragraph{Robust aggregation.} Following \cite{karimireddy2021byzantine,gorbunov2022variance}, we use the following definition of the robust aggregator/aggregation.
\begin{definition}[$(\delta, c)$-robust aggregator \cite{karimireddy2021byzantine,gorbunov2022variance}]\label{def:robust_aggr}
    Let $x_1,x_2,\ldots, x_n$ be such that for some subset $\cG \subseteq [n]$ of size $|\cG| = G \geq (1-\delta)n$, $\delta < \nicefrac{1}{2}$ there exists $\sigma \geq 0$ such that $\frac{1}{G(G-1)}\sum_{i,l \in \cG}\EE\|x_i - x_l\|^2 \leq \sigma^2$. The quantity $\widehat x$ is called $(\delta, c)$-robust aggregator ($(\delta, c)$-\texttt{RAgg}) and denoted as $\widehat x = \texttt{RAgg}(x_1,\ldots, x_n)$ for some number $c > 0$ if the following holds:
    \begin{equation}
        \EE\|\widehat x - \overline x\|^2 \leq c\delta\sigma^2, \label{eq:robust_aggr}
    \end{equation}
    where $\overline{x} = \frac{1}{G}\sum_{i\in \cG} x_i$. In addition, if $\widehat x$ can be computed without the knowledge of $\sigma^2$, then $\widehat x$ is called $(\delta, c)$-agnostic robust aggregator ($(\delta, c)$-\texttt{ARAgg}) and denoted as $\widehat x = \texttt{ARAgg}(x_1,\ldots, x_n)$.
\end{definition}
In other words, the aggregator $\widehat x$ is called robust if, on average, it is ``not far'' from $\overline{x}$ -- the average of the vectors from regular workers $\cG$. Here, the upper bound on how far we allow the robust aggregator to be from the average over regular workers depends on the variance of regular workers and the ratio of Byzantines. It is relatively natural that both characteristics should affect the quality of the aggregation. Moreover, there exists a lower bound stating that for any aggregation rule $\widehat x$ there exists a set of vectors satisfying the conditions of the above definition such that $\EE\|\widehat x - \overline x\|^2 = \Omega(\delta \sigma^2)$, which formally establishes the tightness of Definition~\ref{def:robust_aggr}. We provide several examples of robust aggregators in Appendix~\ref{appendix:robust_aggregation}.

\paragraph{Assumptions.} We make a standard assumption for the analysis of variance-reduced methods \cite{kovalev2020don}.
\begin{assumption}\label{as:main}
    Functions $f_1,\ldots, f_m:\R^d \to \R$ are convex and $L$-smooth ($L > 0$), and function $f:\R^d \to \R$ is additionally $\mu$-strongly convex ($\mu > 0$), i.e., for all $x,y \in \R^d$
    \begin{gather}
        f_j(y) \geq f_j(x) + \langle \nabla f_j(x), y - x \rangle \quad \forall j\in [m], \label{eq:f_j_convex} \\
        \|\nabla f_j(x) - \nabla f_j(y)\| \leq L\|x - y\| \quad \forall j\in [m], \label{eq:f_j_L_smooth}\\
        f(y) \geq f(x) + \langle \nabla f(x), y - x \rangle + \frac{\mu}{2}\|y - x\|^2. \label{eq:f_mu_strongly_convex}
    \end{gather}
\end{assumption}

For one of our results, we make the following additional assumption, which is standard for the stochastic optimization literature \cite{nemirovski2009robust,ghadimi2013stochastic}.
\begin{assumption}\label{as:UBV}
    We assume that there exists $\sigma \geq 0$ such that for any $x \in \R^d$ and $j$ being sampled uniformly at random from $[m]$
    \begin{equation}
        \EE\|\nabla f_j(x) - \nabla f(x)\|^2 \leq \sigma^2. \label{eq:UBV}
    \end{equation}
\end{assumption}

\subsection{Related Work}

Many existing methods for Byzantine-robust distributed optimization are based on the replacement of averaging with special aggregation rules in \algname{Parallel-SGD} \cite{blanchard2017machine,yin2018byzantine,damaskinos2019aggregathor,guerraoui2018hidden,pillutla2022robust}. As it is shown in \cite{baruch2019little,xie2020fall}, such approaches are not Byzantine-robust and can even perform worse than na\"ive \algname{Parallel-SGD} for particular Byzantine-attacks. To circumvent this issue, the authors of \cite{karimireddy2021learning} introduce a formal definition of robust aggregator (see Definition~\ref{def:robust_aggr}) and propose the first distributed methods with provable Byzantine-robustness. The key ingredient in their method is client heavy-ball-type momentum \cite{polyak1964some} to make the method non-permutation-invariant that prevents the algorithm from time-coupled Byzantine attacks. In \cite{karimireddy2021byzantine}, this technique is generalized to heterogeneous problems and robust aggregation agnostic to the noise. An extension to decentralized optimization problems is proposed by \cite{he2022byzantine}. Another approach that ensures Byzantine-robustness both in theory and practice is based on the checks of computations at random moments of time \cite{gorbunov2022secure}. Finally, there are two approaches based on variance reduction\footnote{See \cite{gower2020variance} for a recent survey on variance-reduced methods.} mechanism -- \algname{Byrd-SAGA} \cite{wu2020federated}, which uses well-suited for convex problems \algname{SAGA}-estimator \cite{nguyen2017sarah,fang2018spider,horvath2022adaptivity,li2021page}, and \algname{Byz-VR-MARINA} \cite{gorbunov2022variance}, which employs well-suited for non-convex problems \algname{SARAH}-estimator. We refer to \cite{lyu2022privacy,gorbunov2022secure} for the extensive summaries of other existing approaches.

\section{Main Results}

In this section, we introduce the new method called Byzantine-Robust distributed Loopless Stochastic Variance-Reduced Gradient (\algname{BR-LSVRG}, Algorithm~\ref{alg:BR_LSVRG}). At each iteration of  \algname{BR-LSVRG}, regular workers compute standard \algname{SVRG}-estimator \cite{johnson2013accelerating} (line 7) and send it to the server. The algorithm has two noticeable features. First, unlike many existing distributed methods that use averaging or other aggregation rules vulnerable to Byzantine attacks, \algname{BR-LSVRG} uses a provably robust aggregator (according to Definition~\ref{def:robust_aggr}) on the server. Secondly, following the idea of \cite{hofmann2015variance,kovalev2020don}, in \algname{BR-LSVRG}, regular workers update the reference point $w_i^{k+1}$ as $x^k$ with some small probability $p$. When $w_i^{k+1} = x^k$, worker $i$ has to compute the full gradient during the next step in order to calculate $g_i^k$; otherwise, only $2b$ gradients of the summands from \eqref{eq:main_problem} need to be computed. To make the expected computation cost (number of computed gradients of the summands from \eqref{eq:main_problem}) of $1$ iteration on each regular worker to be $\cO(b)$, probability $p$ is chosen as $p \sim \nicefrac{b}{m}$.

\begin{algorithm}[t]
\caption{Byzantine-Robust Distributed \algname{LSVRG} (\algname{BR-LSVRG})}\label{alg:BR_LSVRG}
\begin{algorithmic}[1]
\STATE {\bfseries Input:} stepsize $\gamma > 0$, batchsize $b \geq 1$, starting point $x^0$, probability $p \in (0, 1]$, $(\delta,c)$-agnostic robust aggregator $\texttt{ARAgg}(x_1, \ldots, x_n)$, number of iterations $K > 0$
\STATE Set $w_i^0 = x^0$ and compute $\nabla f(w_i^0)$ for all $i\in \cG$
\FOR {$k = 0, 1, \ldots , K-1$}
    \STATE Server sends $x^k$ to all workers
    \FOR {all $i \in \cG$}
        \STATE Choose $j_{i,k}^1, j_{i,k}^2, \ldots, j_{i,k}^b$ from $[m]$ uniformly at random independently from each other and other workers
        \STATE $g_i^k = \frac{1}{b}\sum\limits_{t=1}^b\left(\nabla f_{j_{i,k}^t}(x^k) - \nabla f_{j_{i,k}^t}(w_i^k)\right) + \nabla f(w_i^k)$
        \STATE $w_i^{k+1} = \begin{cases} x^k,& \text{with probability } p \\ w_i^k,& \text{with probability } 1-p \end{cases}$
        \STATE Send $g_i^k$ to the server
    \ENDFOR
    \FOR {all $i \in \cB$}
        \STATE Send $g_i^k = *$ (anything) to the server
    \ENDFOR
    \STATE Server receives vectors $g_1^k, \ldots, g_n^k$ from the workers
    \STATE Server computes $x^{k+1} = x^k - \gamma \cdot \texttt{ARAgg}(g_1^k,\ldots, g_n^k)$
\ENDFOR
\STATE {\bfseries Output:} $x^K$
\end{algorithmic}
\end{algorithm}

We start the theoretical convergence analysis with the following result.

\begin{theorem}\label{thm:convergece_under_UBV_assumption}
    Let Assumptions~\ref{as:main} and \ref{as:UBV} hold, batchsize $b \geq 1$, and stepsize $0 < \gamma \leq \min\left\{\nicefrac{1}{12L}, \nicefrac{p}{\mu}\right\}$. Then, the iterates produced by \algname{BR-LSVRG} after $K$ iterations satisfy
    \begin{equation}
        \EE\Psi_{K} \leq \left(1 - \frac{\gamma\mu}{2}\right)^{K}\Psi_0 + \gamma\frac{32c\delta \sigma^2}{b\mu} + \frac{32c\delta \sigma^2}{b\mu^2}, \label{eq:conv_UBV}
    \end{equation}
    where $\Psi_k \eqdef \|x^k - x^*\|^2  + \tfrac{8\gamma^2}{p}\sigma_k^2$, $\sigma_k^2 \eqdef \tfrac{1}{Gm}\sum_{i\in \cG}\sum_{j=1}^m\|\nabla f_j(w_{i}^k) - \nabla f_j(x^*)\|^2$.
\end{theorem}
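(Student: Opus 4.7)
The goal is to derive a one-step contraction of the form $\EE_k[\Psi_{k+1}] \leq (1-\gamma\mu/2)\Psi_k + C$ with $C \asymp \gamma c\delta\sigma^2/(\mu b) + \gamma^2 c\delta\sigma^2/b$, and then unroll it via the geometric sum $\sum_k(1-\gamma\mu/2)^k \leq 2/(\gamma\mu)$, which produces exactly the two additive tail terms $\gamma \cdot 32c\delta\sigma^2/(b\mu)$ and $32c\delta\sigma^2/(b\mu^2)$ stated in the theorem. The proof naturally splits into a distance-to-optimum recurrence for $\|x^{k+1}-x^*\|^2$ and a probabilistic side recurrence for $\sigma_k^2$ arising from the loopless snapshot update.

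\textbf{Distance recurrence.} I begin from $\|x^{k+1}-x^*\|^2 = \|x^k-x^*\|^2 - 2\gamma\langle \widehat g^k, x^k-x^*\rangle + \gamma^2\|\widehat g^k\|^2$, with $\widehat g^k := \texttt{ARAgg}(g_1^k,\ldots,g_n^k)$, take $\EE_k[\cdot]$, and write $\widehat g^k = \overline g^k + (\widehat g^k - \overline g^k)$, where $\overline g^k := \tfrac{1}{G}\sum_{i\in\cG} g_i^k$. Unbiasedness of the SVRG estimator yields $\EE_k\overline g^k = \nabla f(x^k)$, so the linear term in $\overline g^k$ becomes $\langle\nabla f(x^k), x^k-x^*\rangle$, controlled by $\mu$-strong convexity. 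Young's inequality at parameter $\mu/2$ handles the cross term in $\widehat g^k - \overline g^k$ (leaving a $(2\gamma/\mu)\EE_k\|\widehat g^k - \overline g^k\|^2$ penalty), while $\|\widehat g^k\|^2 \leq 2\|\overline g^k\|^2 + 2\|\widehat g^k - \overline g^k\|^2$ and $\EE_k\|\overline g^k\|^2 = \EE_k\|\overline g^k - \nabla f(x^k)\|^2 + \|\nabla f(x^k)\|^2$ prepare the quadratic term.

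\textbf{Two variance estimates.} For the Byzantine term I apply Definition~\ref{def:robust_aggr}: $\EE_k\|\widehat g^k - \overline g^k\|^2 \leq c\delta\rho_k^2$, with $\rho_k^2$ the average pairwise variance of the $g_i^k$ over $\cG$. Conditional independence of distinct honest workers reduces $\rho_k^2$ to $2\Var_k(g_i^k) = \tfrac{2}{b}\Var_j(\nabla f_j(x^k)-\nabla f_j(w_i^k))$, which Assumption~\ref{as:UBV} combined with $\Var(X-Y) \leq 2\Var(X) + 2\Var(Y)$ bounds \emph{uniformly in $k$} by $8\sigma^2/b$. Hence $\EE_k\|\widehat g^k - \overline g^k\|^2 \leq 8c\delta\sigma^2/b$, which is exactly the ingredient producing the $c\delta\sigma^2/b$ noise floor. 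For the honest-variance term I use the sharper SVRG bound $\EE_k\|\overline g^k - \nabla f(x^k)\|^2 \leq \tfrac{4L}{Gb}(f(x^k)-f(x^*)) + \tfrac{2}{Gb}\sigma_k^2$, obtained from the splitting $\|\nabla f_j(x^k) - \nabla f_j(w_i^k)\|^2 \leq 2\|\nabla f_j(x^k) - \nabla f_j(x^*)\|^2 + 2\|\nabla f_j(w_i^k) - \nabla f_j(x^*)\|^2$ and the smoothness-convexity estimate $\EE_j\|\nabla f_j(x^k) - \nabla f_j(x^*)\|^2 \leq 2L(f(x^k)-f(x^*))$; and $\|\nabla f(x^k)\|^2 \leq 2L(f(x^k)-f(x^*))$ by smoothness.

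\textbf{Snapshot recurrence and closing.} The probabilistic update of $w_i^k$ directly gives $\EE_k[\sigma_{k+1}^2] \leq (1-p)\sigma_k^2 + 2pL(f(x^k)-f(x^*))$. Multiplying by the Lyapunov weight $8\gamma^2/p$ and adding to the distance bound, I expect $\gamma \leq p/\mu$ to force the $\sigma_k^2$-coefficient to contract at rate $(1-\gamma\mu/2)$ (absorbing the $\tfrac{4\gamma^2}{Gb}\sigma_k^2$ overhead), and $\gamma \leq \tfrac{1}{12L}$ to force the aggregated coefficient in front of $f(x^k)-f(x^*)$ to be non-positive, so that the term can be dropped. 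The main obstacle is exactly this coefficient bookkeeping: $8\gamma^2/p$ must be large enough to absorb the SVRG-variance overhead yet small enough that the resulting $16\gamma^2 L(f-f^*)$ contribution from the snapshot recurrence does not overwhelm the strong-convexity descent $-2\gamma(f-f^*)$. A second, more conceptual point is that the robust-aggregator error must be bounded via the time-uniform UBV estimate rather than the time-varying SVRG one, since otherwise the additive constant becomes contaminated with $\sigma_k^2$ and $(f-f^*)$ factors and no clean $c\delta\sigma^2/b$ noise floor is obtained.
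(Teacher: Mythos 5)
Your proposal is correct and follows essentially the same route as the paper: the same Lyapunov function $\Psi_k$, the same split $\widehat g^k = \overline g^k + (\widehat g^k - \overline g^k)$ with Young's inequality and strong convexity, the same time-uniform bound $\EE_k\|\widehat g^k - \overline g^k\|^2 \le \nicefrac{8c\delta\sigma^2}{b}$ obtained from Assumption~\ref{as:UBV} via the pairwise-variance computation, the same snapshot recurrence $\EE_k[\sigma_{k+1}^2]\le(1-p)\sigma_k^2+2Lp(f(x^k)-f(x^*))$, and the same geometric unrolling. The only local deviation is that you bound $\EE_k\|\overline g^k\|^2$ through a bias--variance decomposition (gaining a $\nicefrac{1}{Gb}$ factor on the variance part but paying $\|\nabla f(x^k)\|^2\le 2L(f(x^k)-f(x^*))$ separately) instead of the paper's direct Jensen bound on the second moment; this only perturbs absolute constants (in the degenerate case $Gb=1$ your coefficient bookkeeping would need $\gamma\le\nicefrac{1}{14L}$ rather than $\nicefrac{1}{12L}$) and does not change the structure of the argument or the final bound.
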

\begin{proof} For convenience, we introduce new notation: $\widehat g^k = \texttt{ARAgg}(g_1^k,\ldots, g_n^k)$. Then, $x^{k+1} = x^k - \gamma \widehat g^k$ and
\begin{eqnarray*}
    \|x^{k+1} - x^*\|^2 
    &=& \|x^k - x^*\|^2 - 2\gamma \langle x^k-x^*, \overline{g}^k \rangle - 2\gamma \langle x^k -x^* , \widehat{g}^k - \overline{g}^k \rangle\\
    &&  + \gamma^2 \|\overline{g}^k + (\widehat{g}^k - \overline{g}^k) \|^2.
\end{eqnarray*}
Next, we apply inequalities $\langle a,b \rangle \leq \frac{\alpha}{2}\|a\|^2 + \frac{1}{2\alpha}\|b\|^2$ and $\|a+b\|^2 \leq 2\|a\|^2 + 2\|b\|^2$, which hold for any $a,b \in \R^d$, $\alpha > 0$, to the last two terms and take expectation $\EE_k[\cdot]$ from both sides of the above inequality (note that $\EE_k[\overline{g}^k] = \nabla f(x^k)$)
\begin{eqnarray}
    \EE_k\|x^{k+1} - x^*\|^2 &\leq& \left(1 + \frac{\gamma\mu}{2}\right)\|x^k - x^*\|^2 - 2\gamma \langle x^k-x^*, \nabla f(x^k)\rangle  + 2\gamma^2 \EE_k \|\overline{g}^k\|^2 \notag \\
    && +2\gamma \left(\frac{1}{\mu} + \gamma\right) \EE_k \|\widehat{g}^k - \overline{g}^k\|^2\notag \\
    &\overset{\eqref{eq:f_mu_strongly_convex}}{\leq}& 
    \left(1 - \frac{\gamma\mu}{2}\right)\|x^k - x^*\|^2 - 2\gamma\left(f(x^k) - f(x^*)\right)  \notag\\
    && + 2\gamma^2 \EE_k \|\overline{g}^k\|^2 +2\gamma \left(\frac{1}{\mu} + \gamma\right) \EE_k \|\widehat{g}^k - \overline{g}^k\|^2.\label{eq:technical_1}
\end{eqnarray}
To proceed with the derivation, we need to upper bound the last two terms from the above inequality. For the first term, we use Jensen's inequality and the well-known fact that the variance is not larger than the second moment:
\begin{eqnarray}
    \EE_k\|\overline{g}^k\|^2 
    &\leq& \frac{1}{bG}\sum\limits_{i \in \cG}\sum\limits_{t=1}^b\EE_k \|\nabla f_{j_{i,k}^t}(x^k) - \nabla f_{j_{i,k}^t}(w_i^k) + \nabla f(w_i^k)\|^2 \notag\\
    &=& \frac{1}{G}\sum\limits_{i \in \cG}\EE_k \|\nabla f_{j_{i,k}^1}(x^k) - \nabla f_{j_{i,k}^1}(w_i^k) + \nabla f(w_i^k)\|^2 \notag\\
    &\leq& \frac{2}{G}\sum\limits_{i \in \cG}\EE_k \|\nabla f_{j_{i,k}}(x^k) - \nabla f_{j_{i,k}}(x^*)\|^2 \notag\\
    &&  +  \frac{2}{G}\sum\limits_{i \in \cG}\EE_k \|\nabla f_{j_{i,k}^1}(w_i^k) - \nabla f_{j_{i,k}^1}(x^*) - \nabla f(w_i^k)\|^2 \notag\\
    &\leq& \frac{2}{G}\sum\limits_{i \in \cG}\EE_k\left[ \|\nabla f_{j_{i,k}^1}(x^k) - \nabla f_{j_{i,k}^1}(x^*)\|^2 + \|\nabla f_{j_{i,k}^1}(w_i^k) - \nabla f_{j_{i,k}^1}(x^*)\|^2\right] \notag\\
    &=& \frac{2}{Gm}\sum\limits_{i \in \cG}\sum\limits_{j=1}^m\left(\|\nabla f_j(x^k) - \nabla f_j(x^*)\|^2 + \|\nabla f_j(w_i^k) - \nabla f_j(x^*)\|^2 \right) \notag\\
    &\overset{(*)}{\leq}& \frac{4L}{Gm}\sum\limits_{i \in \cG}\sum\limits_{j=1}^m\left(f_j(x^k) - f_j(x^*) - \langle \nabla f_j(x^*), x^k - x^* \rangle\right) + 2\sigma_k^2 \notag\\
    &=& 4L\left(f(x^k) - f(x^*)\right) + 2\sigma_k^2, \label{eq:second_moment_bound}
\end{eqnarray}
where in $(*)$ we use that for any convex $L$-smooth function $h(x)$ and any $x,y \in\R^d$ (e.g., see \cite{nesterov2018lectures})
\begin{equation}
    \|\nabla h(x) - \nabla h(y)\|^2 \leq 2L\left(h(x) - h(y) - \langle \nabla h(y), x - y \rangle\right). \label{eq:L_smoothness_corollary}
\end{equation}
To bound the last term from \eqref{eq:technical_1}, we notice that Assumption~\ref{as:UBV} gives $\forall i,l \in \cG$
\begin{align*}
    \EE_k&\|g_i^k - g_l^k\|^2 \\
    &=\EE_k\|g_i^k - \nabla f(x^k)\|^2 + \EE_k\|g_l^k - \nabla f(x^k)\|^2 + 2\EE_k\left[\langle g_i^k - \nabla f(x^k), g_l^k - \nabla f(x^k) \rangle\right]\\
    &= \EE_k \left\|\frac{1}{b}\sum\limits_{t=1}^b(\nabla f_{j_{i,k}^t}(x^k) - \nabla f(x^k) - (\nabla f_{j_{i,k}^t}(w_i^k) - \nabla f(w_i^k)))\right\|^2\\
    &  + \EE_k \left\|\frac{1}{b}\sum\limits_{t=1}^b(\nabla f_{j_{l,k}^t}(x^k) - \nabla f(x^k) - (\nabla f_{j_{l,k}^t}(w_l^k) - \nabla f(w_l^k)))\right\|^2\\
    &\leq 2\EE_k\left[\left\|\frac{1}{b}\sum\limits_{t=1}^b(\nabla f_{j_{i,k}}(x^k) - \nabla f(x^k))\right\|^2 + \left\|\frac{1}{b}\sum\limits_{t=1}^b(\nabla f_{j_{i,k}}(w_i^k) - \nabla f(w_i^k))\right\|^2\right]\\
    &  + 2\EE_k\left[\left\|\frac{1}{b}\sum\limits_{t=1}^b(\nabla f_{j_{l,k}}(x^k) - \nabla f(x^k))\right\|^2 + \left\|\frac{1}{b}\sum\limits_{t=1}^b(\nabla f_{j_{l,k}}(w_l^k) - \nabla f(w_l^k))\right\|^2\right]\\
    &\leq \frac{2}{b^2}\sum\limits_{t=1}^b\EE_k\left[\|\nabla f_{j_{i,k}}(x^k) - \nabla f(x^k)\|^2 + \|\nabla f_{j_{i,k}}(w_i^k) - \nabla f(w_i^k)\|^2\right]\\
    & + \frac{2}{b^2}\sum\limits_{t=1}^b\EE_k\left[\|\nabla f_{j_{l,k}}(x^k) - \nabla f(x^k)\|^2 + \|\nabla f_{j_{l,k}}(w_l^k) - \nabla f(w_l^k)\|^2\right] \overset{\eqref{eq:UBV}}{\leq} \frac{8\sigma^2}{b}.
\end{align*}
Then, $\frac{1}{G(G-1)}\sum\limits_{i,l \in \cG}\EE_k\|g_i^k - g_l^k\|^2 \leq \frac{8\sigma^2}{b}$ 
and by Definition~\ref{def:robust_aggr} we have
\begin{equation}
    \EE_k \|\widehat{g}^k - \overline{g}^k\|^2 \overset{\eqref{eq:robust_aggr}}{\leq} \frac{8c\delta \sigma^2}{b}. \label{eq:distortion_bound}
\end{equation}
Putting all together in \eqref{eq:technical_1}, we arrive at
\begin{eqnarray}
    \EE_k\Psi_{k+1} &\leq& \left(1 - \frac{\gamma\mu}{2}\right)\|x^k - x^*\|^2 + 4\gamma^2 \sigma_k^2 - 2\gamma(1 - 4L\gamma)\left(f(x^k) - f(x^*)\right)\notag\\
    &&  + 16\gamma \left(\frac{1}{\mu} + \gamma\right)\frac{c\delta \sigma^2}{b} + \frac{8\gamma^2}{p}\EE_k[\sigma_{k+1}^2]. \label{eq:squared_distance_bound}
\end{eqnarray}

Next, we estimate $\EE_k[\sigma^2_{k+1}]:$
\begin{eqnarray}
\EE_k[\sigma^2_{k+1}] 
&=& \frac{1-p}{Gm}\sum\limits_{i\in \cG} \sum\limits_{j=1}^m \| \nabla f_j(w_{i}^k) - \nabla f_j(x^*)\|^2   + \frac{p}{m}\sum\limits_{j=1}^m \| \nabla f_j(x^k) - \nabla f_j(x^*)\|^2 \notag\\
&\overset{\eqref{eq:L_smoothness_corollary}}{\leq}& (1-p) \sigma^2_k + \frac{2Lp}{m} \sum\limits_{j=1}^m \left(f_j(x^k) - f_j(x^*) - \langle \nabla f_j(x^*), x^k - x^* \rangle\right)\notag\\
&=& (1-p) \sigma^2_k + 2Lp\left(f(x^k) - f(x^*)\right).\label{eq:sigma_k+1_bound}
\end{eqnarray}

Finally, we combine \eqref{eq:squared_distance_bound} and \eqref{eq:sigma_k+1_bound}:
\begin{eqnarray*}
    \EE_k\Psi_{k+1} 
    &\leq& \left(1 - \frac{\gamma\mu}{2}\right)\|x^k - x^*\|^2 + \left(1 - \frac{p}{2}\right)\frac{8\gamma^2}{p}\sigma_k^2\\
    &&  - 2\gamma (1 - 12L\gamma)\left(f(x^k) - f(x^*)\right) + 16\gamma \left(\frac{1}{\mu} + \gamma\right) \frac{c\delta \sigma^2}{b}\\
    &\leq& \left(1 - \min\left\{\frac{\gamma\mu}{2}, \frac{p}{2}\right\}\right)\Psi_k - 2\gamma (1 - 12L\gamma)\left(f(x^k) - f(x^*)\right)\\
    &&  + 16\gamma \left(\frac{1}{\mu} + \gamma\right) \frac{c\delta \sigma^2}{b}\\
    &\leq& \left(1 - \frac{\gamma\mu}{2}\right)\Psi_k + 16\gamma \left(\frac{1}{\mu} + \gamma\right) \frac{c\delta \sigma^2}{b},
\end{eqnarray*}
where in the last step we use $0 < \gamma \leq \min\left\{\nicefrac{1}{12L}, \nicefrac{p}{\mu}\right\}$. Taking the full expectation and unrolling the obtained recurrence, we get that for all $K \geq 0$
\begin{eqnarray*}
    \EE\Psi_K &\leq& \left(1 - \frac{\gamma\mu}{2}\right)^{K}\Psi_0 + 16\gamma \left(\frac{1}{\mu} + \gamma\right) \frac{c\delta \sigma^2}{b}\sum\limits_{k=0}^{K-1}\left(1 - \frac{\gamma\mu}{2}\right)^{k}\\
    &\leq& \left(1 - \frac{\gamma\mu}{2}\right)^{K}\Psi_0 + \gamma\frac{32c\delta \sigma^2}{b\mu} + \frac{32c\delta \sigma^2}{b\mu^2},
\end{eqnarray*}
which concludes the proof. $\square$
\end{proof}

Since $\Psi_K \geq \|x^K - x^*\|^2$, Theorem~\ref{thm:convergece_under_UBV_assumption} states that \algname{BR-LSVRG} converges linearly (in expectation) to the neighborhood of the solution. We notice that the neighborhood's size is proportional to $\nicefrac{\sigma^2}{b}$, which is typical for Stochastic Gradient Descent-type methods \cite{gorbunov2020unified}, and also proportional to the ratio of Byzantine workers $\delta$. When $\delta = 0$, \algname{BR-LSVRG} converges linearly and recovers the rate of \algname{LSVRG} \cite{kovalev2020don} up to numerical factors. However, in the general case ($\delta > 0$), the last term in \eqref{eq:conv_UBV} can be reduced only via increasing batchsize $b$. We believe that this is unavoidable for \algname{BR-LSVRG} in the worst case since robust aggregation creates a bias in the update, and the analysis of \algname{LSVRG} is very sensitive to the bias in the update direction. Nevertheless, when the size of the neighborhood is small enough, e.g., when $\delta$ or $\sigma^2$ are small, the method can achieve relatively good accuracy with moderate batchsize -- we demonstrate this phenomenon in the experiments.

Next, we present an alternative convergence result that does not rely on the bounded variance assumption.
\begin{theorem}\label{thm:no_UBV}
   Let Assumption~\ref{as:main} hold, $0 < \gamma \leq \min\left\{\nicefrac{1}{144L}, \nicefrac{p}{\mu}\right\}$, and $b \geq \max\left\{1, \nicefrac{c\delta}{\gamma\mu}\right\}$. Then, the iterates of \algname{BR-LSVRG} after $K$ iterations satisfy
    \begin{equation*}
        \EE\Psi_{K} \leq \left(1 - \frac{\gamma\mu}{2}\right)^{K}\Psi_0,
    \end{equation*}
    where $\Psi_k \eqdef \|x^k - x^*\|^2  + \tfrac{72\gamma^2}{p}\sigma_k^2$, $\sigma_k^2 \eqdef \tfrac{1}{Gm}\sum_{i\in \cG}\sum_{j=1}^m\|\nabla f_j(w_{i}^k) - \nabla f_j(x^*)\|^2$. 
\end{theorem}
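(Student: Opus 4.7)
The plan is to follow the proof of Theorem~\ref{thm:convergece_under_UBV_assumption} line by line up to inequality \eqref{eq:technical_1} and the second-moment bound \eqref{eq:second_moment_bound}, neither of which uses Assumption~\ref{as:UBV}, and then to replace the bounded-variance estimate \eqref{eq:distortion_bound} on the aggregation distortion $\EE_k\|\widehat g^k - \overline g^k\|^2$ by a bound expressed entirely in terms of the quantities $f(x^k)-f(x^*)$ and $\sigma_k^2$ that already appear in the Lyapunov function. The batchsize condition $b \geq c\delta/(\gamma\mu)$ will then force the extra contribution to be of the same order as the terms produced by $\EE_k\|\overline g^k\|^2$, so the combination with the reference-point recursion \eqref{eq:sigma_k+1_bound} will close the induction.

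For the new distortion bound I would proceed as follows. Since for $i\neq l$ in $\cG$ the estimators $g_i^k$ and $g_l^k$ are conditionally independent with $\EE_k g_i^k = \EE_k g_l^k = \nabla f(x^k)$, the cross term vanishes and
\[
\frac{1}{G(G-1)}\sum_{i,l\in\cG}\EE_k\|g_i^k-g_l^k\|^2 \;=\; \frac{2}{G}\sum_{i\in\cG}\EE_k\|g_i^k - \nabla f(x^k)\|^2 .
\]
Each per-worker variance is then controlled by the standard computation for the \algname{SVRG} estimator: sample-independence inside the mini-batch reduces it to $\tfrac{1}{b}$ times the single-sample variance, the inequality $\mathrm{Var}(Z)\le\EE\|Z\|^2$ (applied to $\nabla f_{j_{i,k}^1}(w_i^k)-\nabla f(w_i^k)$), the splitting $\|\nabla f_j(x^k)-\nabla f_j(w_i^k)\|^2 \leq 2\|\nabla f_j(x^k)-\nabla f_j(x^*)\|^2 + 2\|\nabla f_j(w_i^k)-\nabla f_j(x^*)\|^2$, and the smoothness corollary \eqref{eq:L_smoothness_corollary} together yield a bound of the form $\tfrac{1}{b}\bigl(c_1 L(f(x^k)-f(x^*))+c_2\sigma_k^2\bigr)$ per worker. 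Averaging over $\cG$ and invoking Definition~\ref{def:robust_aggr} produces
\[
\EE_k\|\widehat g^k - \overline g^k\|^2 \;\leq\; \frac{c\delta}{b}\bigl(A\,L(f(x^k)-f(x^*)) + B\,\sigma_k^2\bigr)
\]
for explicit constants $A,B$, replacing the UBV-dependent bound \eqref{eq:distortion_bound}.

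Plugging this together with \eqref{eq:second_moment_bound} into \eqref{eq:technical_1}, and using $2\gamma(1/\mu+\gamma)\leq 4\gamma/\mu$ (valid since $\gamma\leq p/\mu\leq 1/\mu$), the new terms have size $\tfrac{c\delta\gamma}{b\mu}\cdot(\cdots)$. The batchsize assumption $c\delta \leq b\gamma\mu$ then converts each such prefactor into $\gamma^2$, so the aggregation-distortion contribution enters the recursion at the same order as the $\sigma_k^2$ and $L(f-f^*)$ terms produced by $\EE_k\|\overline g^k\|^2$. Substituting the reference-point recursion \eqref{eq:sigma_k+1_bound}, the enlarged Lyapunov coefficient $72$ in $\Psi_k=\|x^k-x^*\|^2+\tfrac{72\gamma^2}{p}\sigma_k^2$ absorbs all $\sigma_k^2$ contributions; the condition $\gamma\leq p/\mu$ collapses them to $(1-\gamma\mu/2)\tfrac{72\gamma^2}{p}\sigma_k^2$, while $\gamma\leq 1/(144L)$ makes the remaining coefficient of $f(x^k)-f(x^*)$ non-positive so this term can be dropped. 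The resulting one-step contraction $\EE_k\Psi_{k+1}\leq(1-\gamma\mu/2)\Psi_k$ is then unrolled exactly as at the end of Theorem~\ref{thm:convergece_under_UBV_assumption}.

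The main obstacle is the constant bookkeeping: I expect to have to verify that the enlarged Lyapunov coefficient ($72$ instead of $8$) and the tightened stepsize restriction ($1/(144L)$ rather than $1/(12L)$) are simultaneously sufficient to absorb the $\sigma_k^2$ and $L(f-f^*)$ terms coming from both $\EE_k\|\overline g^k\|^2$ and the new distortion bound, with enough slack left over for the $(f-f^*)$ coefficient to remain non-positive. The structural argument is otherwise identical to Theorem~\ref{thm:convergece_under_UBV_assumption}; the only genuinely new ingredient is the replacement of the UBV-based distortion estimate by the \algname{SVRG}-variance-based one above, which is why the price paid is exactly the lower bound on $b$ appearing in the assumptions.
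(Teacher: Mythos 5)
Your proposal is correct and follows essentially the same route as the paper's proof: reuse \eqref{eq:technical_1}, \eqref{eq:second_moment_bound}, and \eqref{eq:sigma_k+1_bound}, replace the UBV-based distortion bound by an \algname{SVRG}-variance bound of the form $\tfrac{c\delta}{b}\left(A\,L(f(x^k)-f(x^*))+B\,\sigma_k^2\right)$, and use $b\ge \nicefrac{c\delta}{\gamma\mu}$ to absorb it into the $\gamma^2$-order terms before closing the recursion with the enlarged Lyapunov coefficient. The only (immaterial) difference is that you exploit exact cancellation of the cross term in $\sum_{i,l}\EE_k\|g_i^k-g_l^k\|^2$ where the paper applies Young's inequality and loses a factor of $2$; the constant bookkeeping you defer does close exactly as you predict, with coefficient $72$ and stepsize $\nicefrac{1}{144L}$.
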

\begin{proof}
    First, we notice that inequalities \eqref{eq:technical_1}, \eqref{eq:second_moment_bound}, and \eqref{eq:sigma_k+1_bound} from the proof of Theorem~\ref{thm:convergece_under_UBV_assumption} are derived without Assumption~\ref{as:UBV}. We need to derive a version of \eqref{eq:distortion_bound} that does not rely on Assumption~\ref{as:UBV}. Due to the independence of $j_{i,k}^1, j_{i,k}^2, \ldots, j_{i,k}^b$ we have $\forall i \in \cG$
    \begin{eqnarray}
        \EE_k\|g_i^k - \nabla f(x^k)\|^2 
        &=& \frac{1}{b^2}\sum\limits_{t=1}^b\EE_k\|\nabla f_{j_{i,k}^t}(x^k) - \nabla f_{j_{i,k}^t}(w_i^k) + \nabla f(w_i^k) - \nabla f(x^k)\|^2 \notag\\
        &\leq& \frac{1}{b^2}\sum\limits_{t=1}^b\EE_k\|\nabla f_{j_{i,k}^t}(x^k) - \nabla f_{j_{i,k}^t}(w_i^k)\|^2 \notag
    \end{eqnarray}
    and
    \begin{align}
        \EE_k&\|g_i^k - \nabla f(x^k)\|^2 \notag\\
        &\leq \frac{2}{b^2}\sum\limits_{t=1}^b\EE_k\left[\|\nabla f_{j_{i,k}^t}(x^k) - \nabla f_{j_{i,k}^t}(x^*)\|^2 + \|\nabla f_{j_{i,k}^t}(w_i^k) - \nabla f_{j_{i,k}^t}(x^*)\|^2\right] \notag\\
        &= \frac{2}{bm}\sum\limits_{j=1}^m\left(\|\nabla f_j(x^k) - \nabla f_j(x^*)\|^2 +\|\nabla f_j(w_i^k) - \nabla f_j(x^*)\|^2\right) \notag\\
        &\overset{\eqref{eq:L_smoothness_corollary}}{\leq} \frac{4L}{b}\left(f(x^k) - f(x^*)\right) + \frac{2}{bm}\sum\limits_{j=1}^m\|\nabla f_j(w_i^k) - \nabla f_j(x^*)\|^2. \label{eq:technical_no_UBV_1}
    \end{align}
    Therefore,
    \begin{eqnarray*}
        \frac{1}{G(G-1)}\sum\limits_{i,l \in \cG}\EE_k\|g_i^k - g_l^k\|^2 
        &\leq& \frac{2}{G(G-1)}\sum\limits_{i,l \in \cG, i\neq l}\EE_k\|g_i^k  - \nabla f(x^k)\|^2\\
        &&  + \frac{2}{G(G-1)}\sum\limits_{i,l \in \cG, i\neq l}\EE_k\|g_l^k  - \nabla f(x^k)\|^2 \\
        &=& \frac{4}{G}\sum\limits_{i\in \cG} \EE_k\|g_i^k  - \nabla f(x^k)\|^2\\
        &\overset{\eqref{eq:technical_no_UBV_1}}{\leq}& 
        \frac{16L}{b}\left(f(x^k) - f(x^*)\right) + \frac{8}{b}\sigma_k^2,
    \end{eqnarray*}
    and by definition of $(\delta,c)$-robust aggregator we have $\EE_k \|\widehat{g}^k - \overline{g}^k\|^2 \overset{\eqref{eq:robust_aggr}}{\leq} \frac{16Lc\delta}{b}\left(f(x^k) - f(x^*)\right) + \frac{8c\delta}{b}\sigma_k^2$. 
Combining this inequality with \eqref{eq:technical_1} and \eqref{eq:second_moment_bound}, we get
\begin{eqnarray*}
    \EE_k\|x^{k+1} - x^*\|^2 &\leq& \left(1 - \frac{\gamma\mu}{2}\right)\|x^k - x^*\|^2 - 2\gamma\left(f(x^k) - f(x^*)\right)  \notag\\
    && + 8L\gamma^2 \left(f(x^k) - f(x^*)\right) + 4\gamma^2 \sigma_k^2\\
    &&  + \frac{32\gamma Lc\delta}{b} \left(\frac{1}{\mu} + \gamma\right)\left(f(x^k) - f(x^*)\right) + \frac{16\gamma c\delta}{b}\left(\frac{1}{\mu} + \gamma\right)\sigma_k^2\\
    &\leq& \left(1 - \frac{\gamma\mu}{2}\right)\|x^k - x^*\|^2 + 36\gamma^2 \sigma_k^2\\
    &&  - 2\gamma\left(1 - 72\gamma L\right)\left(f(x^k) - f(x^*)\right),
\end{eqnarray*}
where in the last step we use $b \geq \max\left\{1, \nicefrac{c\delta}{\gamma\mu}\right\}$. Finally, this inequality and \eqref{eq:sigma_k+1_bound} imply that
\begin{eqnarray*}
    \EE_k\Psi_{k+1} 
    &\leq& \left(1 - \frac{\gamma\mu}{2}\right)\|x^k - x^*\|^2 + 36\gamma^2 \sigma_k^2 - 2\gamma\left(1 - 72\gamma L\right)\left(f(x^k) - f(x^*)\right)\\
    &&  + (1 - p)\frac{72\gamma^2}{p}\sigma_k^2 + 144L\gamma^2\left(f(x^k) - f(x^*)\right)\\
    &=& \left(1 - \frac{\gamma\mu}{2}\right)\|x^k - x^*\|^2 + \left(1 - \frac{p}{2}\right)\frac{72\gamma^2}{p}\sigma_k^2\\
    &&  - 2\gamma\left(1 - 144\gamma L\right)\left(f(x^k) - f(x^*)\right) \leq \left(1 - \frac{\gamma\mu}{2}\right)\Psi_k,
\end{eqnarray*}
where the last step follows from $\gamma \leq \min\left\{\nicefrac{1}{144L}, \nicefrac{p}{\mu}\right\}$. Taking the full expectation from both sides and unrolling the recurrence, we get the result. $\square$
\end{proof}

In contrast to Theorem~\ref{thm:convergece_under_UBV_assumption},  Theorem~\ref{thm:no_UBV} establishes linear convergence of \algname{BR-LSVRG} to any accuracy. However, Theorem~\ref{thm:no_UBV} requires batchsize to satisfy $b \geq \max\{1, \nicefrac{c\delta}{\gamma \mu}\}$, which can be huge in the worst case. If $p = \nicefrac{b}{m}$, $\gamma = \min\{\nicefrac{1}{144L}, \nicefrac{b}{m\mu}\}$, $b = \max\{1, \nicefrac{144c\delta L}{\mu}, \sqrt{c\delta m}\}$, and $m \geq b$ (for example, these assumptions are satisfied when $m$ is sufficiently large), then, according to Theorem~\ref{thm:no_UBV}, \algname{BR-LSVRG} finds $x^K$ such that $\EE\|x^K - x^*\|^2 \leq \varepsilon \Psi_0$ after
\begin{gather}
    \cO\left(\left(\frac{L}{\mu} + \frac{m}{b}\right)\log\frac{1}{\varepsilon}\right) \text{ iterations}, \label{eq:BRLSVRG_iteration_complexity}\\
    \cO\left(\left(\frac{L}{\mu} + \frac{L^2\sqrt{c\delta}}{\mu^2} + \frac{L\sqrt{c\delta m}}{\mu} + m\right)\log\frac{1}{\varepsilon}\right) \text{ oracle calls}. \label{eq:BRLSVRG_oracle_complexity}
\end{gather}
Under the same assumptions, to achieve the same goal \algname{Byrd-SAGA} requires \cite{wu2020federated}
\begin{gather}
    \cO\left(\frac{m^2L^2}{b^2(1-2\delta)\mu^2}\log\frac{1}{\varepsilon}\right) \text{ iterations}, \label{eq:Byrd_SAGA_iteration_complexity}\\
    \cO\left(\frac{m^2L^2}{b(1-2\delta)\mu^2}\log\frac{1}{\varepsilon}\right) \text{ oracle calls}. \label{eq:Byrd_SAGA_oracle_complexity}
\end{gather}
Complexity bounds for \algname{Byrd-SAGA} are inferior to the ones derived for \algname{BR-LSVRG} as long as our result is applicable. Moreover, when $\delta = 0$ (no Byzantines)  our result recovers the known one for \algname{LSVRG} (up to numerical factors), while the upper bounds \eqref{eq:Byrd_SAGA_iteration_complexity} and \eqref{eq:Byrd_SAGA_oracle_complexity} are much larger than the best-known ones for \algname{SAGA}. This comparison highlights the benefits of our approach compared to the closest one.

Finally, we compare our results against the current state-of-the-art ones obtained for \algname{Byz-VR-MARINA} in \cite{gorbunov2022variance}. In particular, under weaker conditions (Polyak-{\L}ojasiewicz condition \cite{polyak1963gradient,lojasiewicz1963topological} instead of strong convexity), the authors of \cite{gorbunov2022variance} prove that to achieve $\EE[f(x^K) - f(x^*)] \leq \varepsilon (f(x^0) - f(x^*))$ \algname{Byz-VR-MARINA} requires
\begin{gather}
    \cO\left(\left(\frac{L}{\mu} + \frac{L\sqrt{m}}{\mu b\sqrt{n}} + \frac{L m \sqrt{c\delta}}{\mu \sqrt{b^3}} + \frac{m}{b}\right)\log\frac{1}{\varepsilon}\right) \text{ iterations}, \label{eq:Byz_VR_MARINA_iteration_complexity}\\
    \cO\left(\left(\frac{bL}{\mu} + \frac{L\sqrt{m}}{\mu \sqrt{n}} + \frac{L m \sqrt{c\delta}}{\mu \sqrt{b}} + m\right)\log\frac{1}{\varepsilon}\right) \text{ oracle calls}. \label{eq:Byz_VR_MARINA_oracle_complexity}
\end{gather}
Complexity bounds for \algname{Byz-VR-MARINA} are not better than ones derived for \algname{BR-LSVRG} as long as our result is applicable. Moreover, in the special case, when $m > b\sqrt{n}$ (big data regime), iteration complexity of \algname{Byz-VR-MARINA} \eqref{eq:Byz_VR_MARINA_iteration_complexity} is strictly worse than the one we have for \algname{BR-LSVRG} \eqref{eq:BRLSVRG_iteration_complexity}. When $\delta = 0$, our results are strictly better than the ones for \algname{Byz-VR-MARINA}. However, it is important to notice that (i) the results for \algname{Byz-VR-MARINA} are derived under weaker assumptions and (ii) in contrast to the results for \algname{Byrd-SAGA} and \algname{Byz-VR-MARINA}, our results require the batchsize to be large enough in general.

\section{Numerical Experiments}
In our numerical experiments, we consider logistic regression with $\ell_2$-regularization -- an instance of \eqref{eq:main_problem} with $f_j(x) = \ln(1+\exp(-y_j \langle a_{j}, x \rangle)) + \frac{\ell_2}{2}\| x\|^2$. Here $\{a_{j}\}_{j \in[m]} \subset \R^d$ are vectors of ``features'', $\{y_j\}_{j\in[m]} \subset \{-1,1\}^m$ are labels, and $\ell_2 \geq 0$ is a  parameter of $\ell_2$-regularization. This problem satisfies Assumption~\ref{as:main} (and also Assumption~\ref{as:UBV} since gradients $\nabla f_j(x)$ are bounded): for each $j\in [m]$ function $f_j$ is $\ell_2$-strongly convex and $L_j$-smooth with $L_j = \ell_2 + \nicefrac{\|a_j\|^2}{4}$ and function $f$ is $L$-smooth with $L = \ell_2 + \nicefrac{\lambda_{\max}(\mA^\top \mA)}{4m}$, where $\mA \in \R^{m\times d}$ is such that the $j$-th row of $\mA$ equals $a_j$ and $\lambda_{\max}(\mA^\top \mA)$ denotes the largest eigenvalue of $\mA^\top \mA$. We chose $l_2 = \nicefrac{L}{1000}$ in all experiments. We consider $4$ datasets from LIBSVM library \cite{chang2011libsvm}: \texttt{a9a} ($m = 32561, d = 123$), \texttt{phishing} ($m = 11055, d = 68$), \texttt{w8a} ($m = 49749, d = 300$) and \texttt{mushrooms} ($m = 8124, d = 112$). The total number of workers in our experiments equals $n = 16$ with $3$ Byzantine workers among them. Byzantine workers use one of the following baseline attacks: $\bullet$ \textbf{Bit Flipping (BF):} Byzantine workers compute $g_i^k$ following the algorithm and send $-g_i^k$ to the server; $\bullet$ \textbf{Label Flipping (LF):} Byzantine workers compute $g_i^k$ with $y_{j_{i,k}^t}$ replaced by $-y_{j_{i,k}^t}$; $\bullet$ \textbf{A Little Is Enough (ALIE) \cite{baruch2019little}:} Byzantine workers compute empirical mean $\mu_{\cG}$ and standard deviation $\sigma_{\cG}$ of $\{g_i^k\}_{i \in \cG}$ and send vector $\mu_{\cG} - z\sigma_{\cG}$ to the server, where $z$ controls the strength of the attack ($z = 1.06$ in our experiments); $\bullet$ \textbf{Inner Product Manipulation (IPM) \cite{xie2020fall}:} Byzantine workers send $-\frac{\varepsilon}{G}\sum_{i\in \cG} g_i^k$ to the server, where $\varepsilon > 0$ is a parameter ($\varepsilon = 0.1$ in our experiments). Our code is publicly available: \url{https://github.com/Nikosimus/BR-LSVRG}.
\newline

\noindent\textbf{Experiment 1:} \algname{BR-LSVRG} \textbf{with different batchsizes.} In this experiment, we tested \algname{BR-LSVRG} with two different batchsizes: 1 and $0.01m$. Stepsize was chosen as $\gamma = \nicefrac{1}{12 L}$. In all runs, \algname{BR-LSVRG} with moderate batchsize $b = 0.01m$ achieves a very high accuracy of the solution. In addition, \algname{BR-LSVRG} with batchsize $b = 1$ always achieves at least $10^{-5}$ functional suboptimality, which is a relatively good accuracy as well. This experiment illustrates that \algname{BR-LSVRG} can converge to high accuracy even with small or moderate batchsizes.
\newline

\begin{figure}[t]
\begin{minipage}[h]{0.24\linewidth}
\center{\includegraphics[width=1\linewidth]{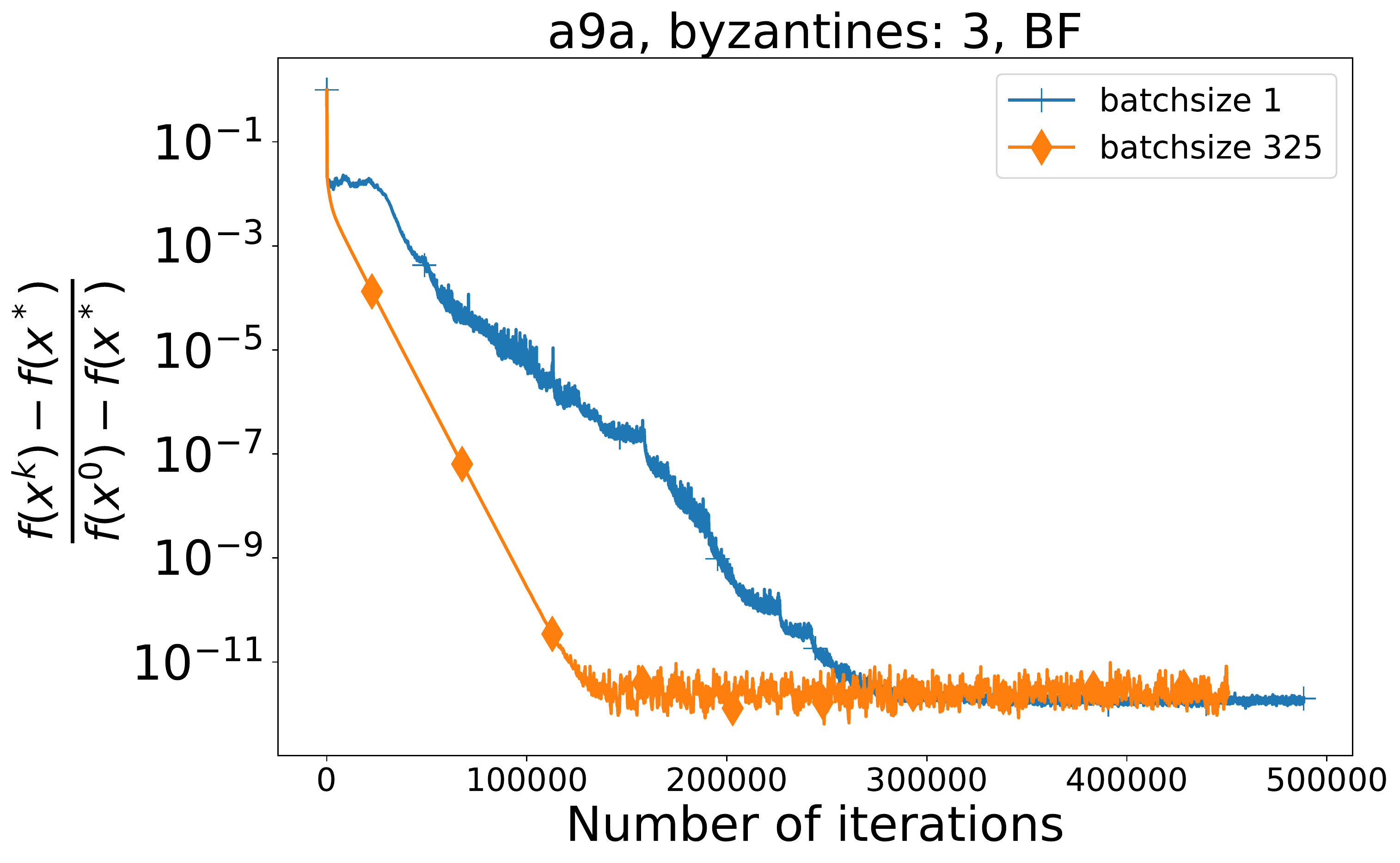}} \\
\end{minipage}
\hfill
\begin{minipage}[h]{0.24\linewidth}
\center{\includegraphics[width=1\linewidth]{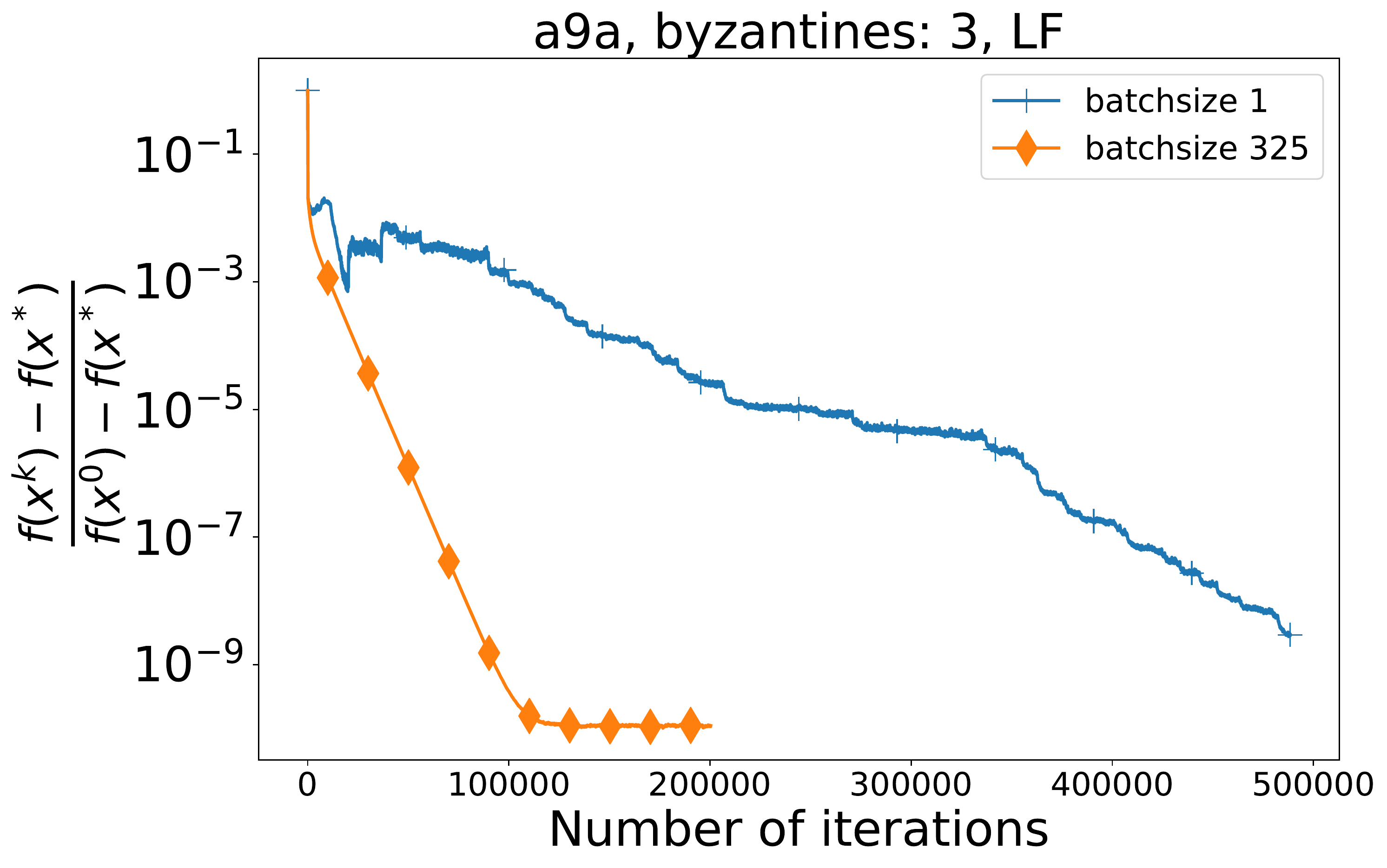}} \\
\end{minipage}
\hfill
\begin{minipage}[h]{0.24\linewidth}
\center{\includegraphics[width=1\linewidth]{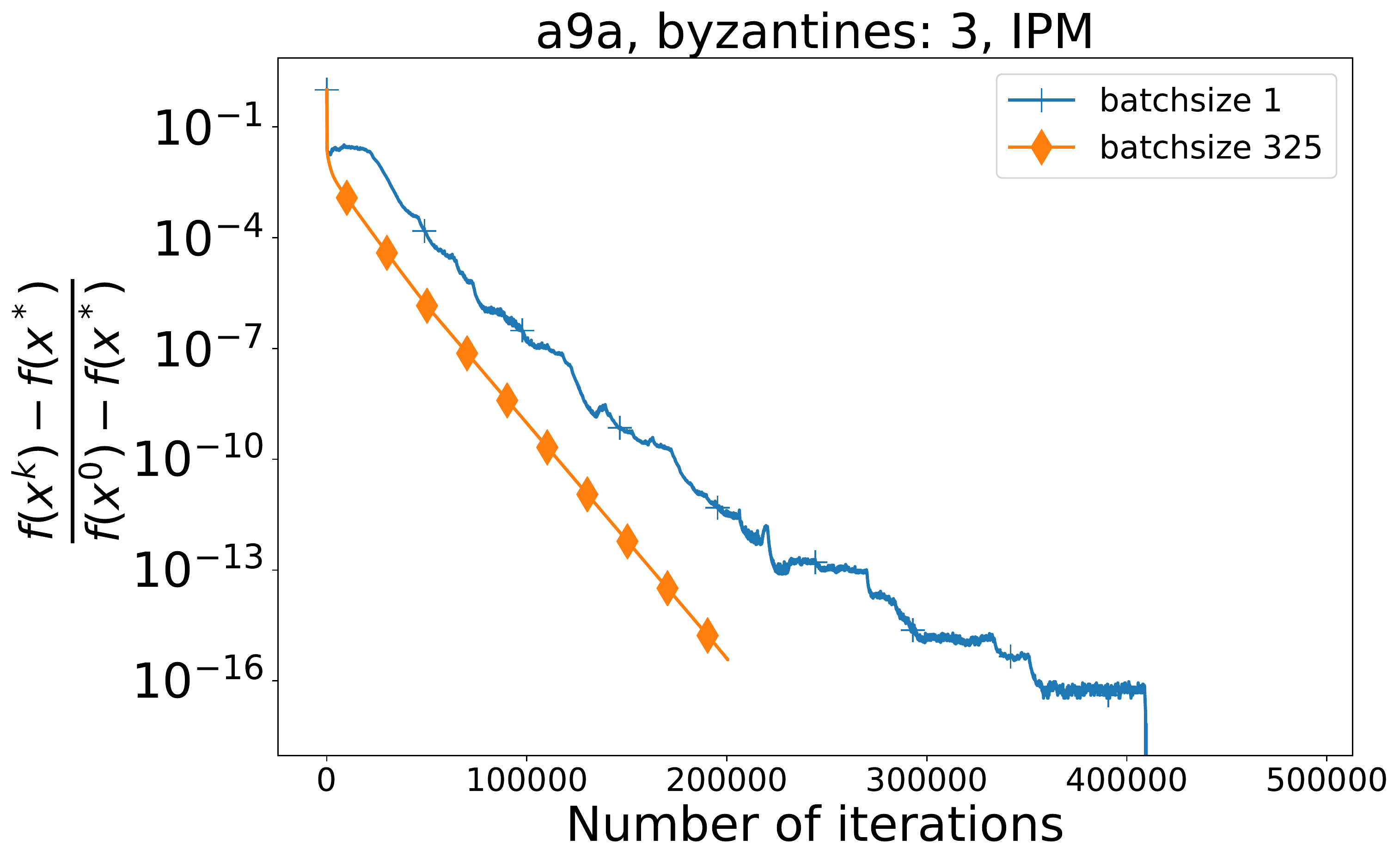}} \\
\end{minipage}
\hfill
\begin{minipage}[h]{0.24\linewidth}
\center{\includegraphics[width=1\linewidth]{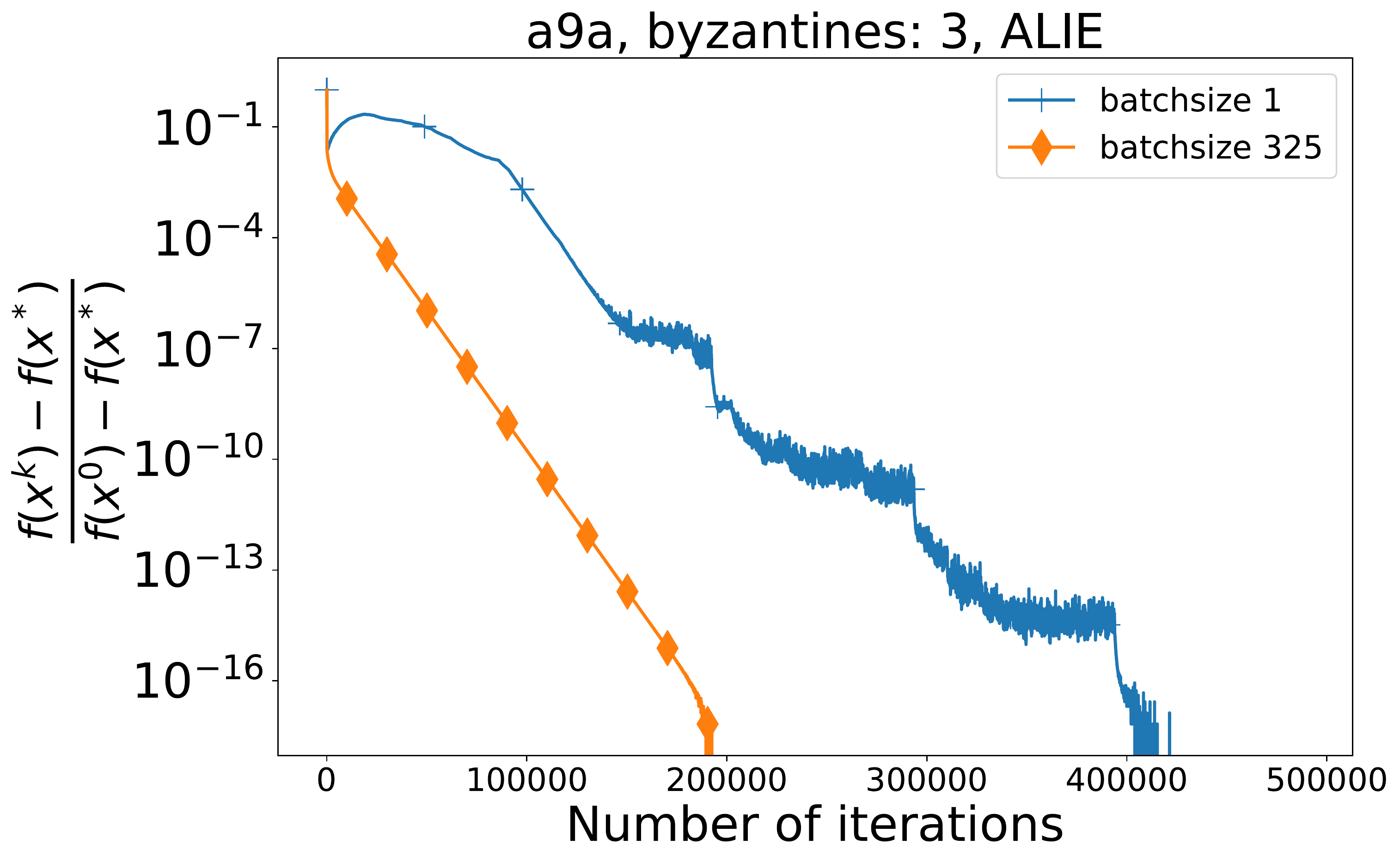}} \\
\end{minipage}
\vfill
\begin{minipage}[h]{0.24\linewidth}
\center{\includegraphics[width=1\linewidth]{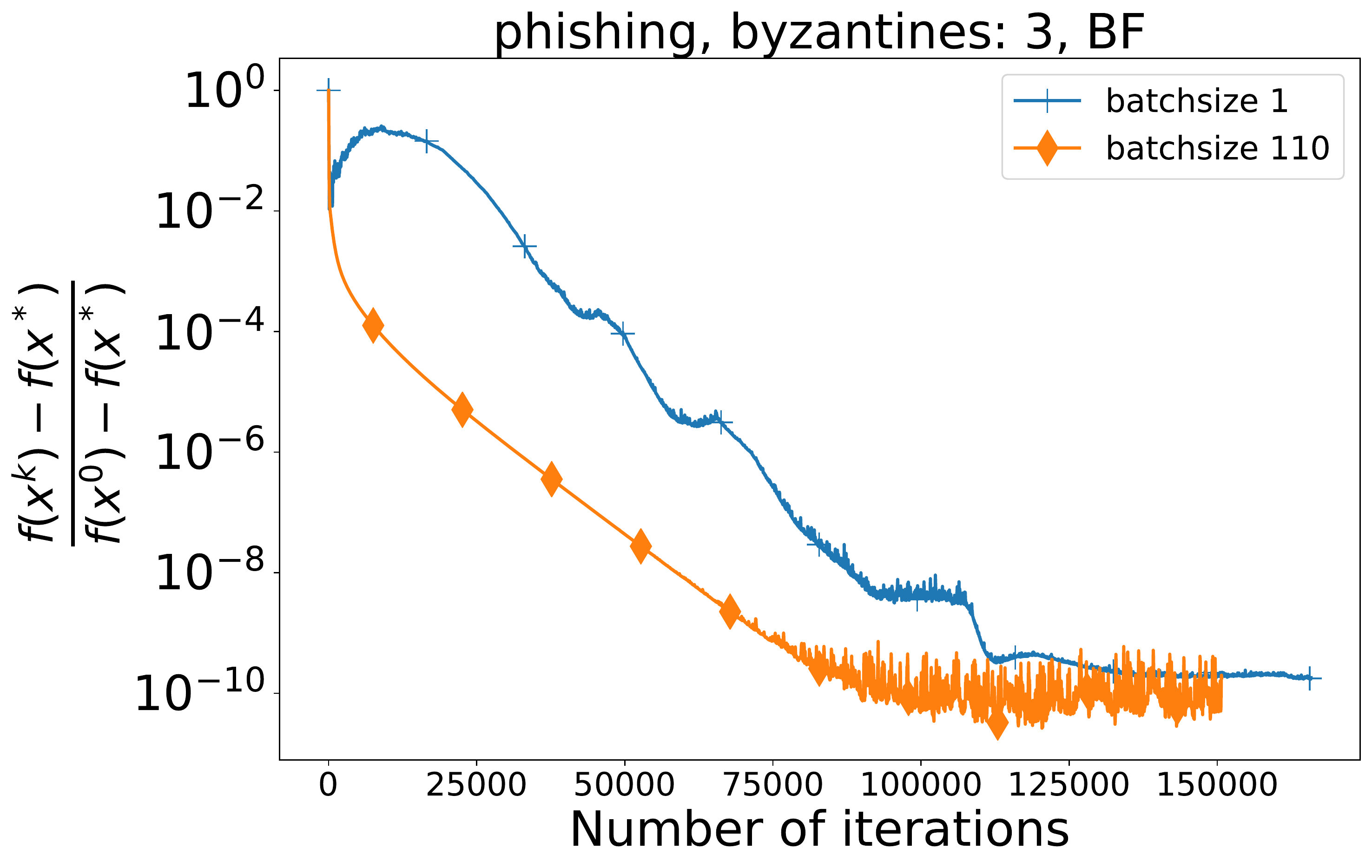}} \\
\end{minipage}
\hfill
\begin{minipage}[h]{0.24\linewidth}
\center{\includegraphics[width=1\linewidth]{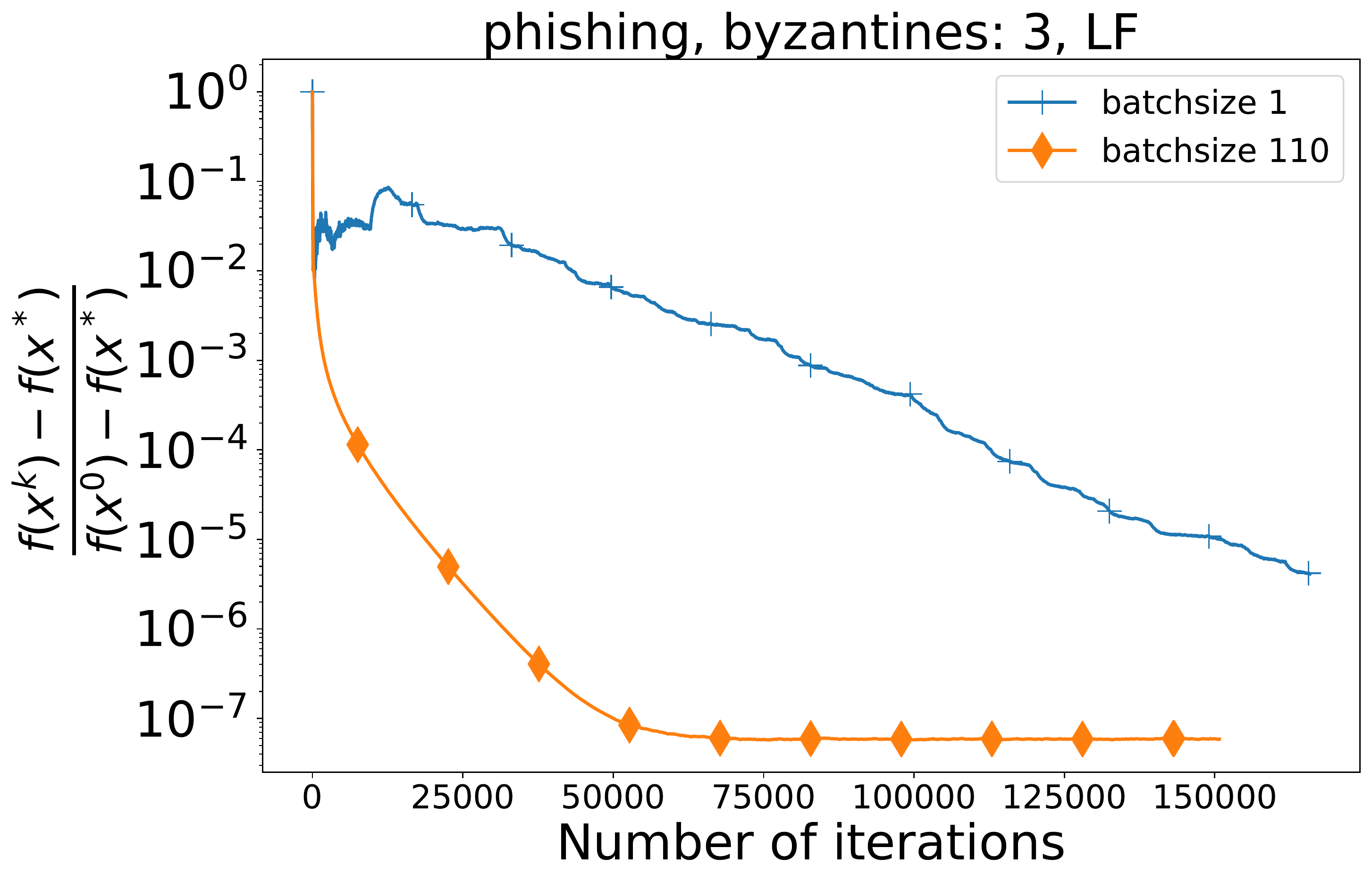}} \\
\end{minipage}
\hfill
\begin{minipage}[h]{0.24\linewidth}
\center{\includegraphics[width=1\linewidth]{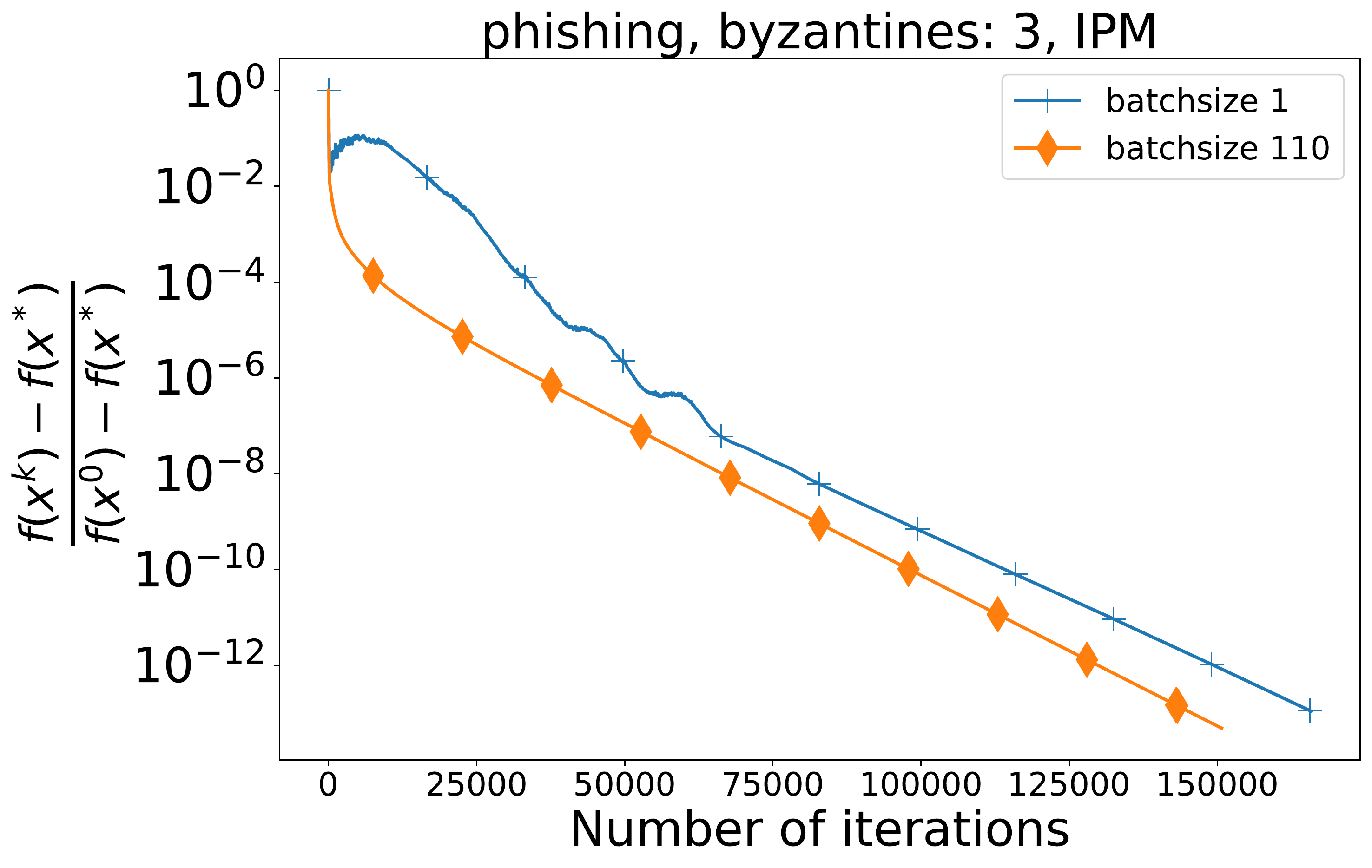}} \\
\end{minipage}
\hfill
\begin{minipage}[h]{0.24\linewidth}
\center{\includegraphics[width=1\linewidth]{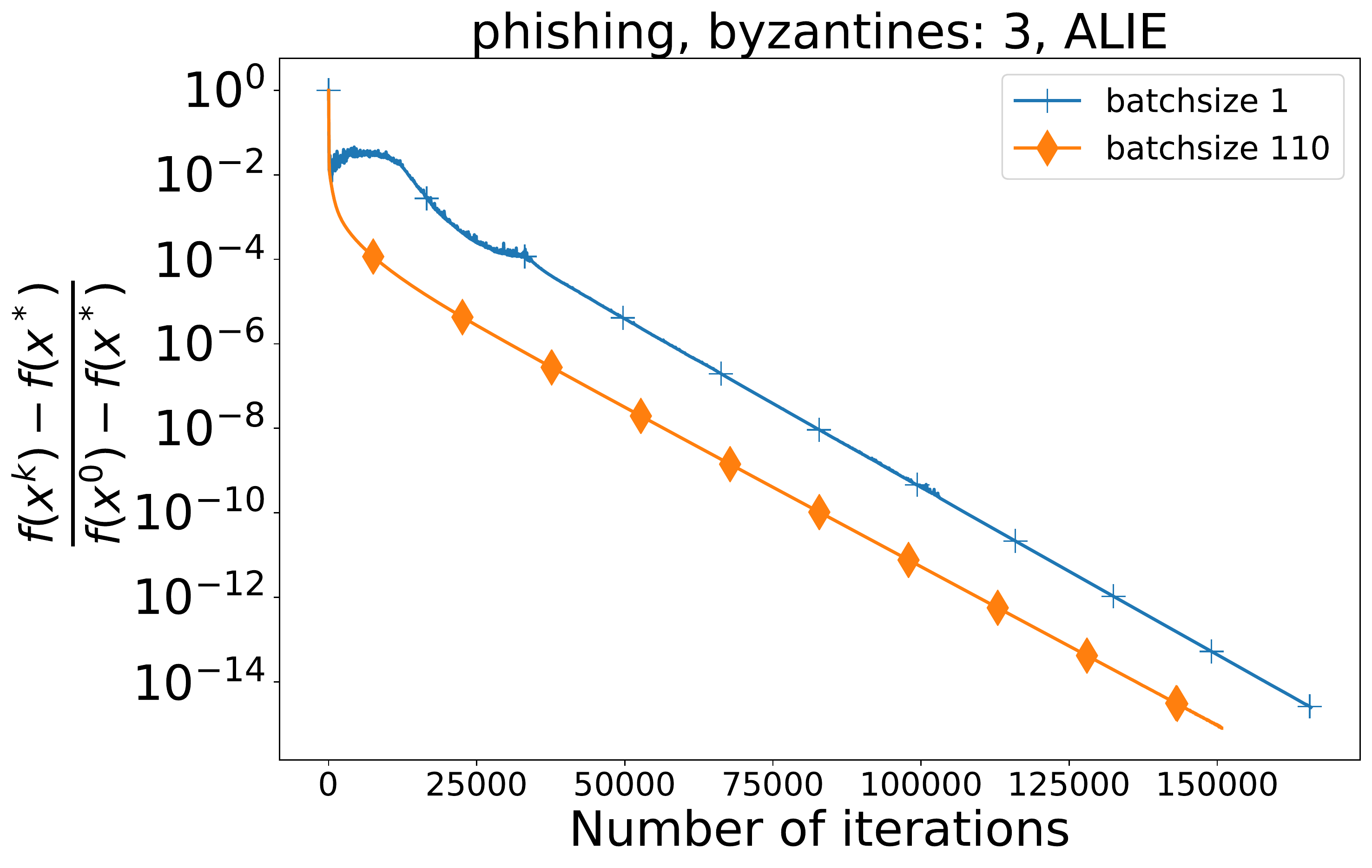}} \\
\end{minipage}
\vfill
\begin{minipage}[h]{0.24\linewidth}
\center{\includegraphics[width=1\linewidth]{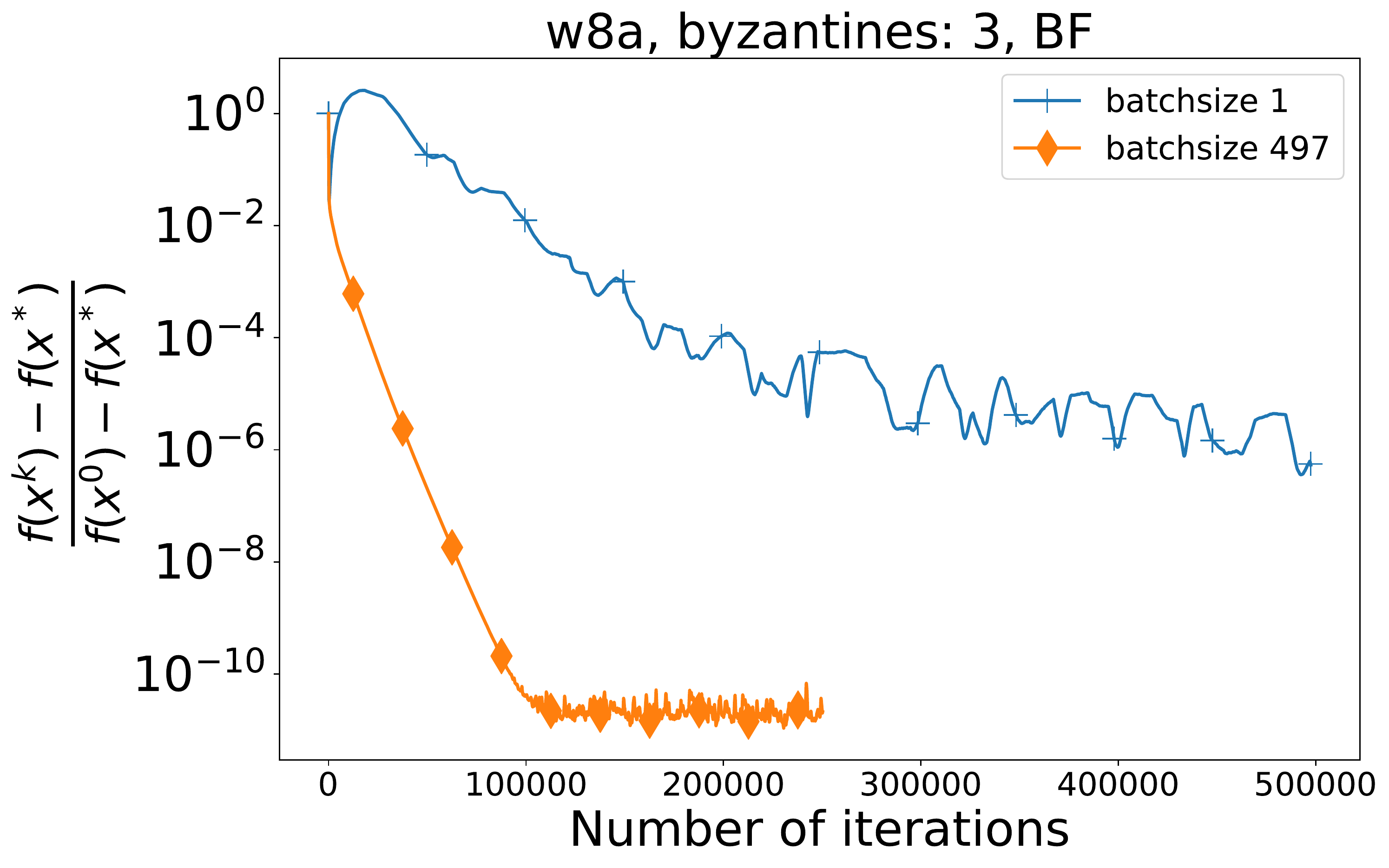}} \\
\end{minipage}
\hfill
\begin{minipage}[h]{0.24\linewidth}
\center{\includegraphics[width=1\linewidth]{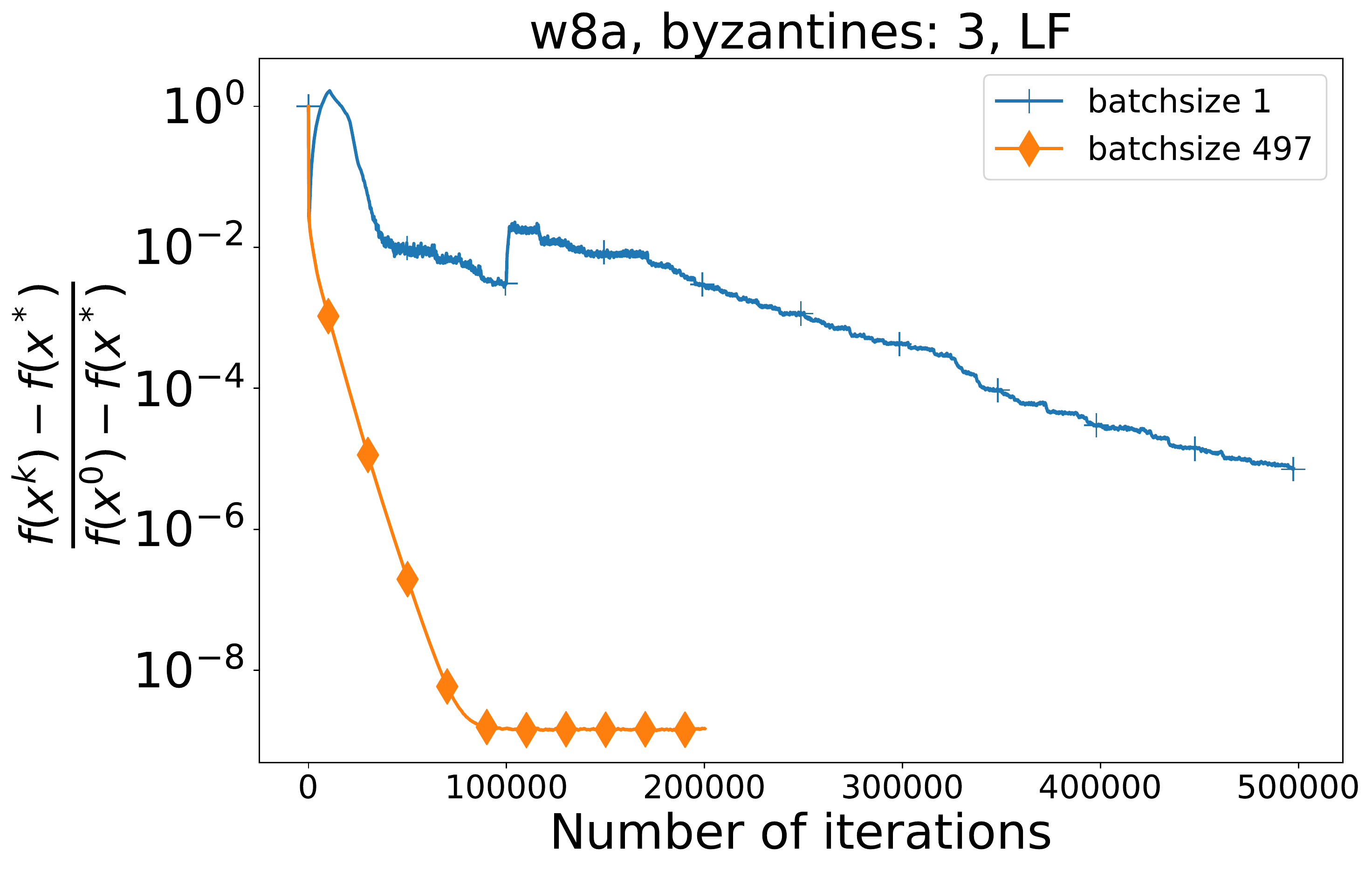}} \\
\end{minipage}
\hfill
\begin{minipage}[h]{0.24\linewidth}
\center{\includegraphics[width=1\linewidth]{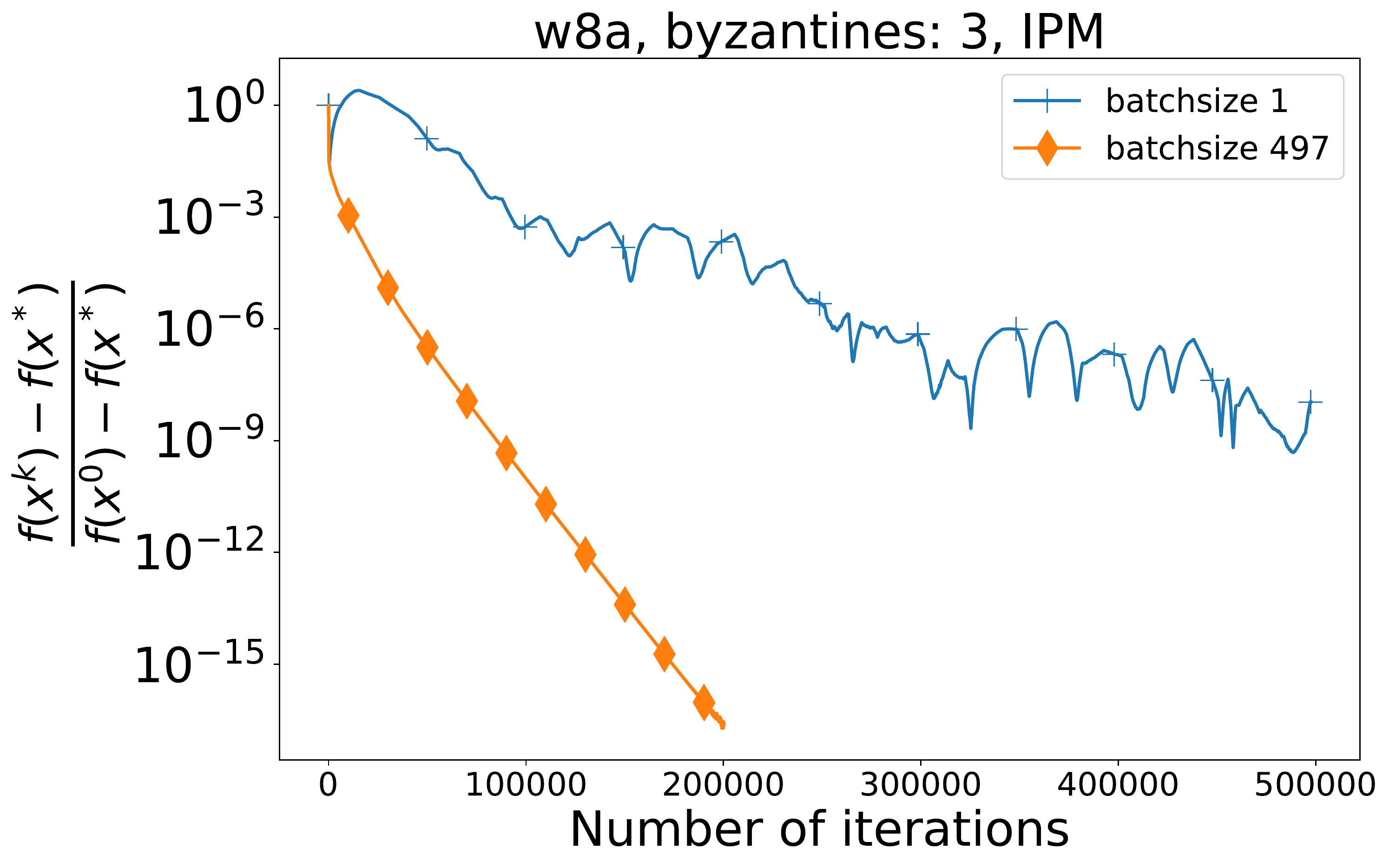}} \\
\end{minipage}
\hfill
\begin{minipage}[h]{0.24\linewidth}
\center{\includegraphics[width=1\linewidth]{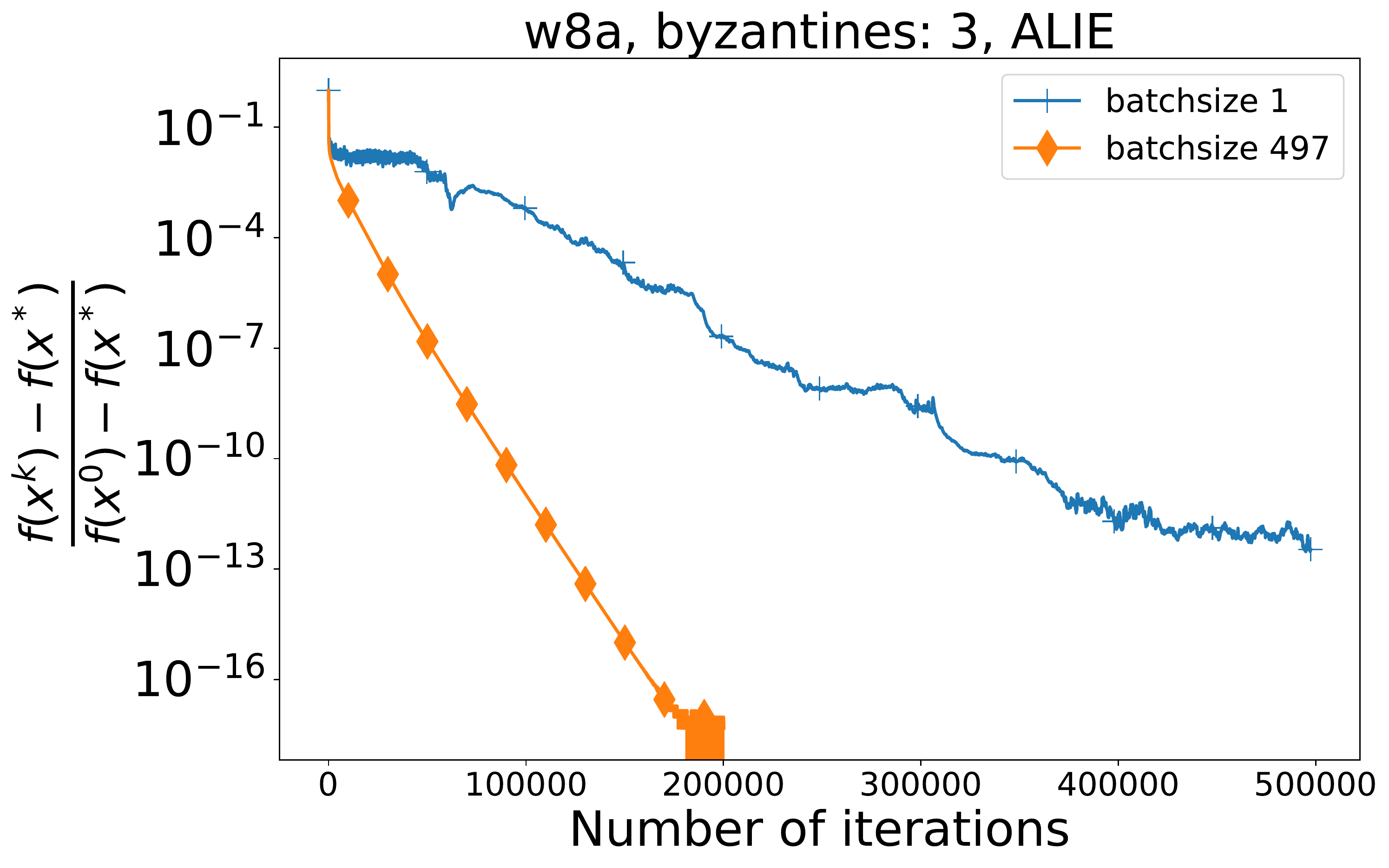}} \\
\end{minipage}
\vfill
\begin{minipage}[h]{0.24\linewidth}
\center{\includegraphics[width=1\linewidth]{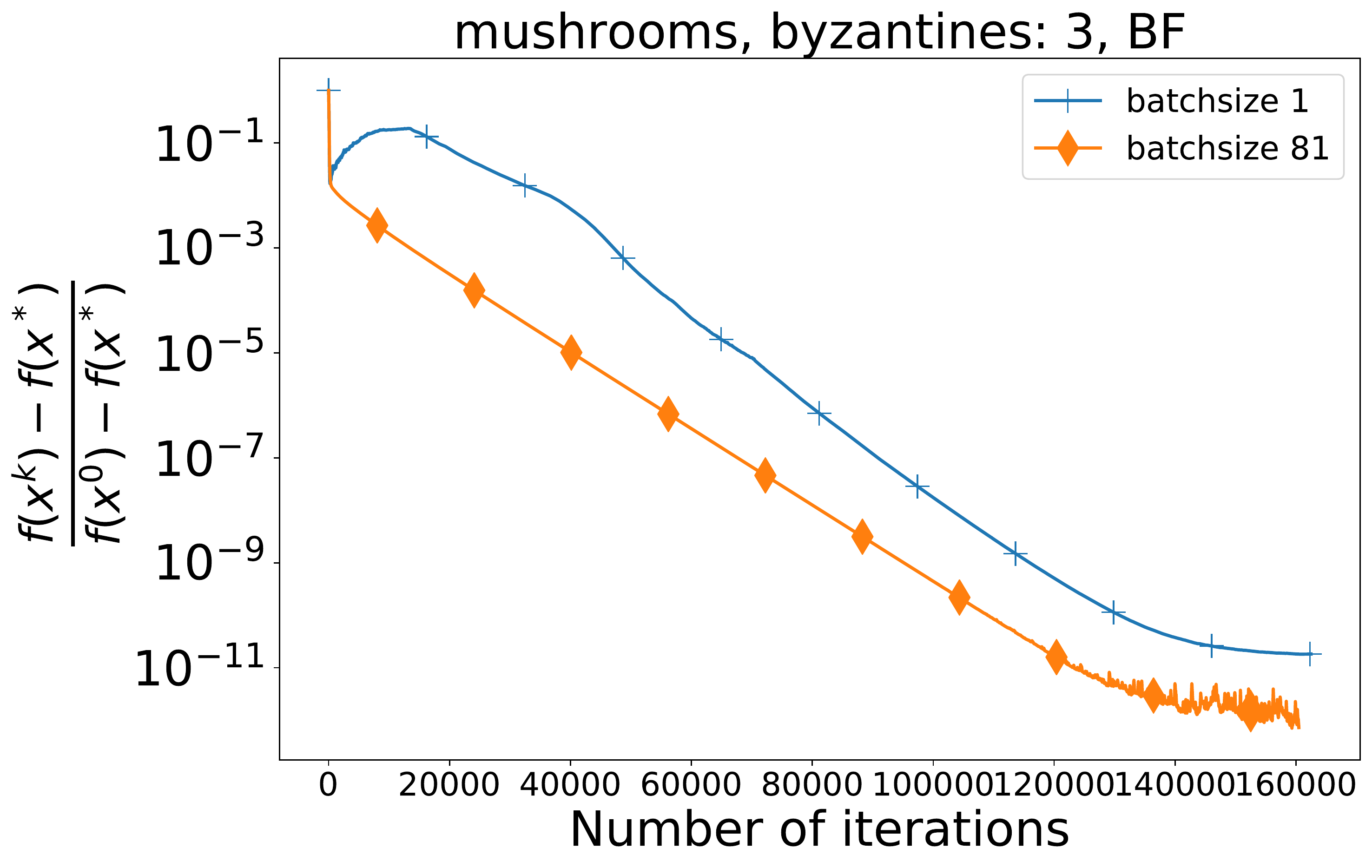}} \\
\end{minipage}
\hfill
\begin{minipage}[h]{0.24\linewidth}
\center{\includegraphics[width=1\linewidth]{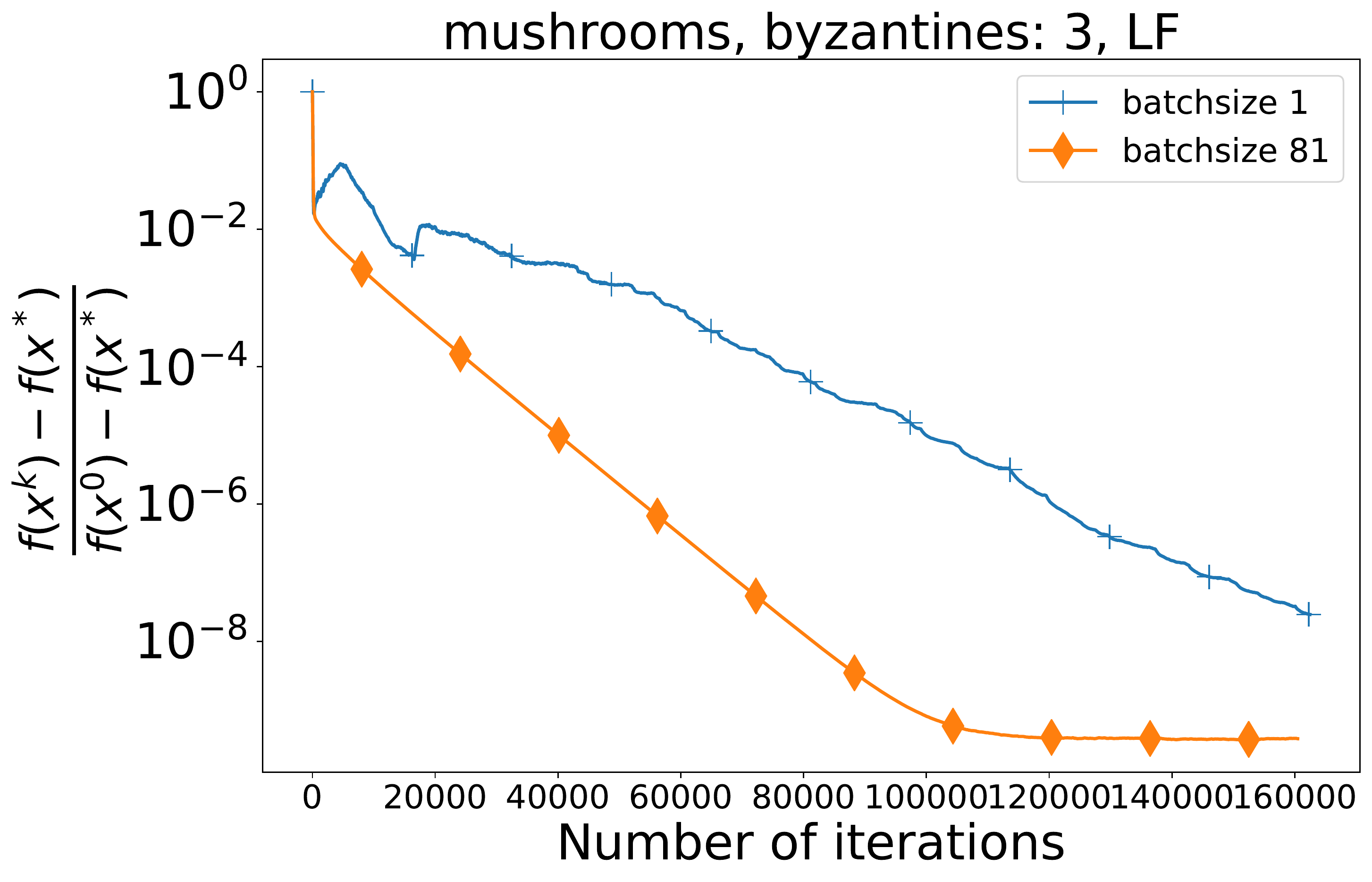}} \\
\end{minipage}
\hfill
\begin{minipage}[h]{0.24\linewidth}
\center{\includegraphics[width=1\linewidth]{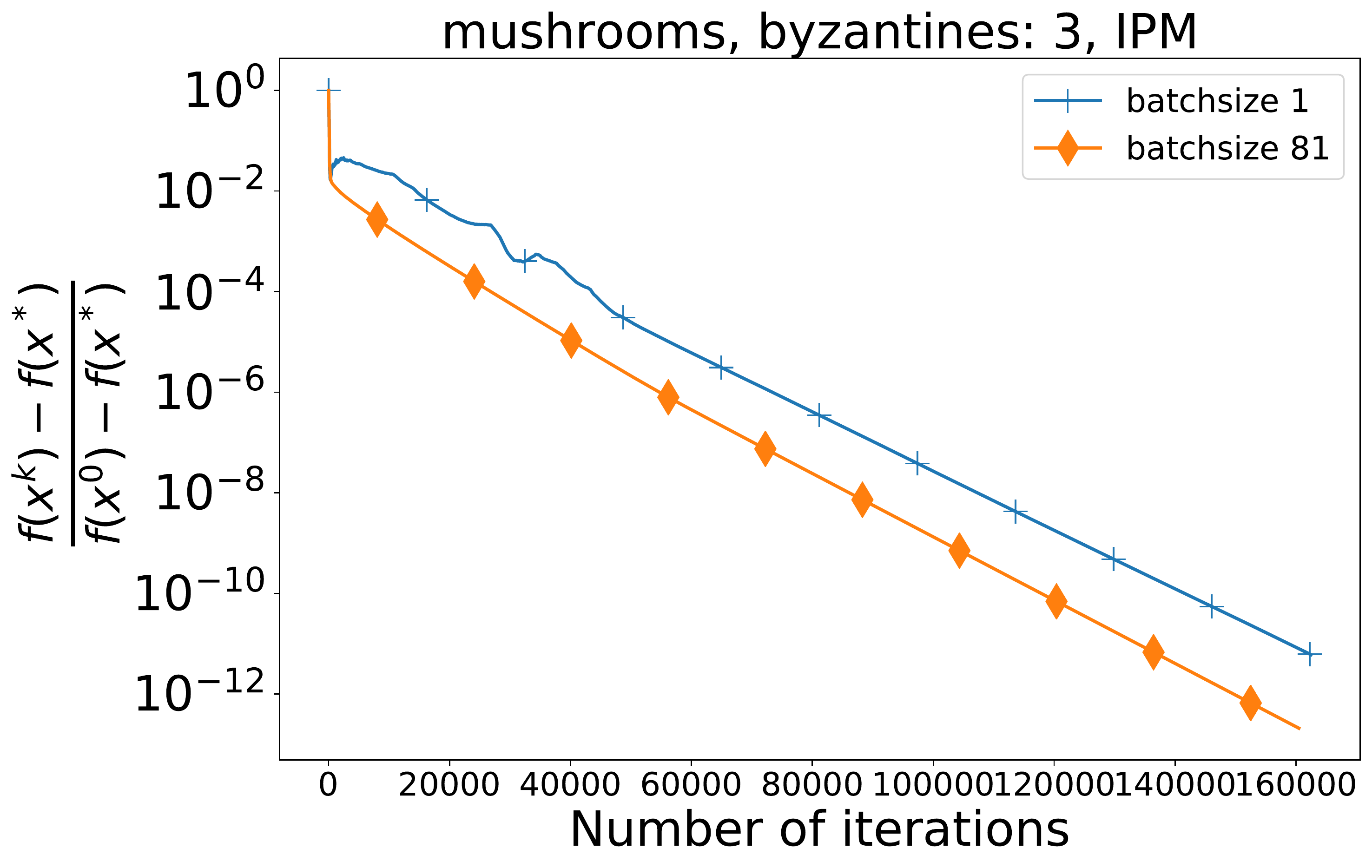}} \\
\end{minipage}
\hfill
\begin{minipage}[h]{0.24\linewidth}
\center{\includegraphics[width=1\linewidth]{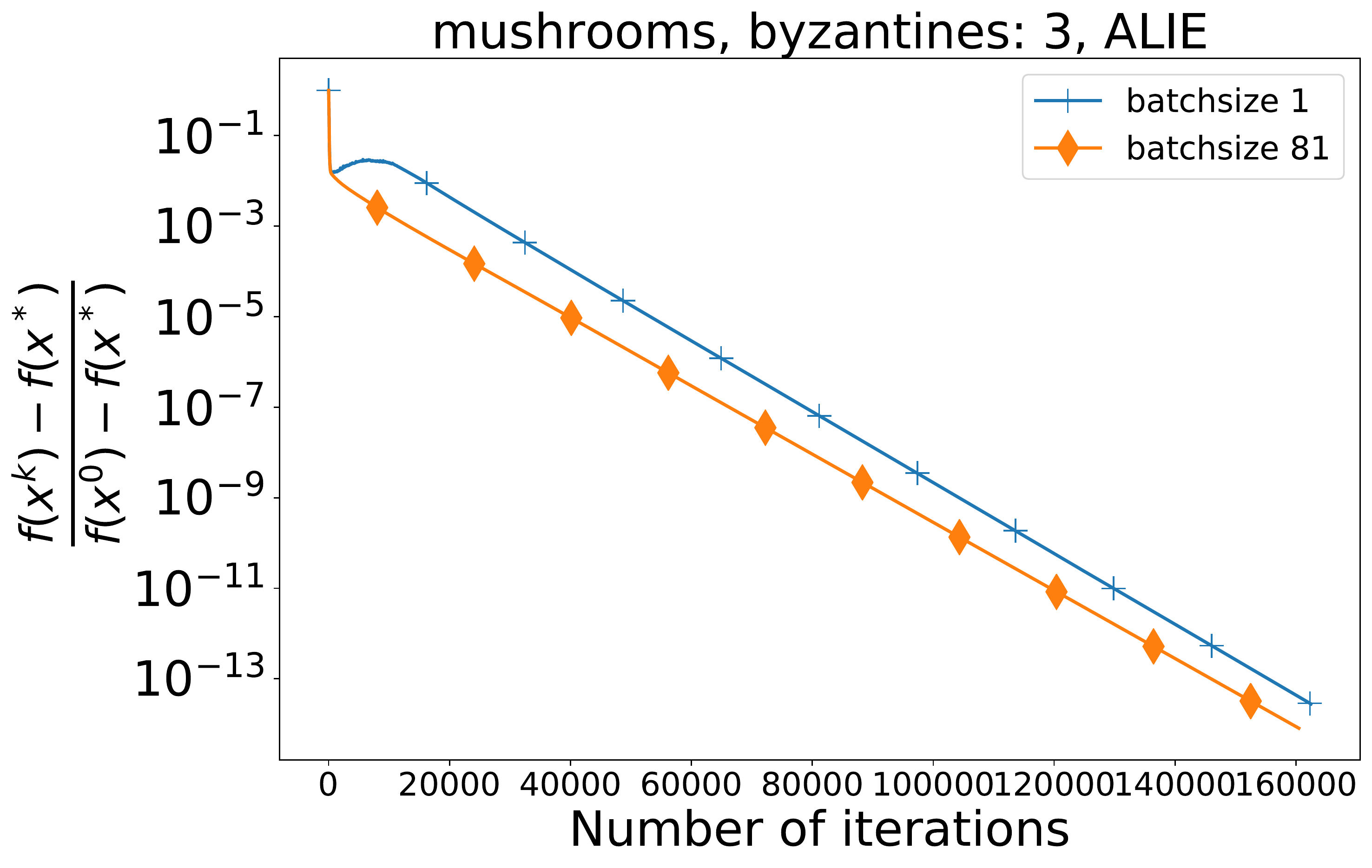}} \\
\end{minipage}
\vfill
\caption{Trajectories of \algname {BR-LSVRG} with batchsizes 1 and $0.01m$. Each row corresponds to one of the used datasets with 4 different types of attacks.}
\label{first_exp}
\end{figure}

\noindent\textbf{Experiment 2: comparison with \algname{Byz-VR-MARINA} and \algname{Byrd-SAGA}.} Next, we compare \algname{BR-LSVRG} with \algname{Byz-VR-MARINA} and \algname{Byrd-SAGA}. All the methods were run with stepsize $\gamma = \nicefrac{5}{2L}$ and batchsize $0.01m$. In all cases, \algname{BR-LSVRG} achieves better accuracy than \algname{Byrd-SAGA} and shows a comparable convergence to \algname{Byz-VR-MARINA} to the very high accuracy. We also tested \algname{Byrd-SAGA} with smaller stepsizes, but the method did not achieve better accuracy.

\begin{figure}[t]
\begin{minipage}[h]{0.24\linewidth}
\center{\includegraphics[width=1\linewidth]{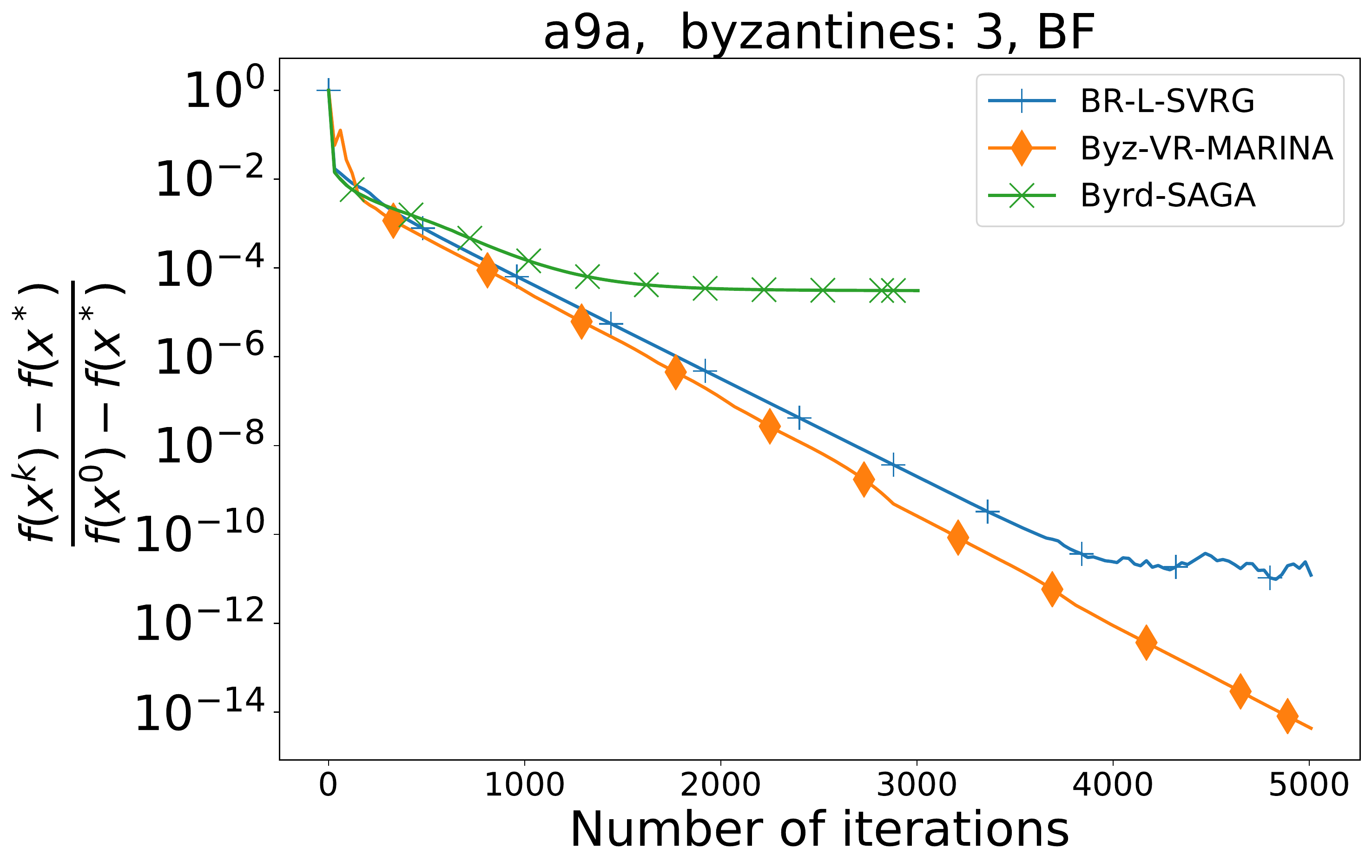}} \\
\end{minipage}
\hfill
\begin{minipage}[h]{0.24\linewidth}
\center{\includegraphics[width=1\linewidth]{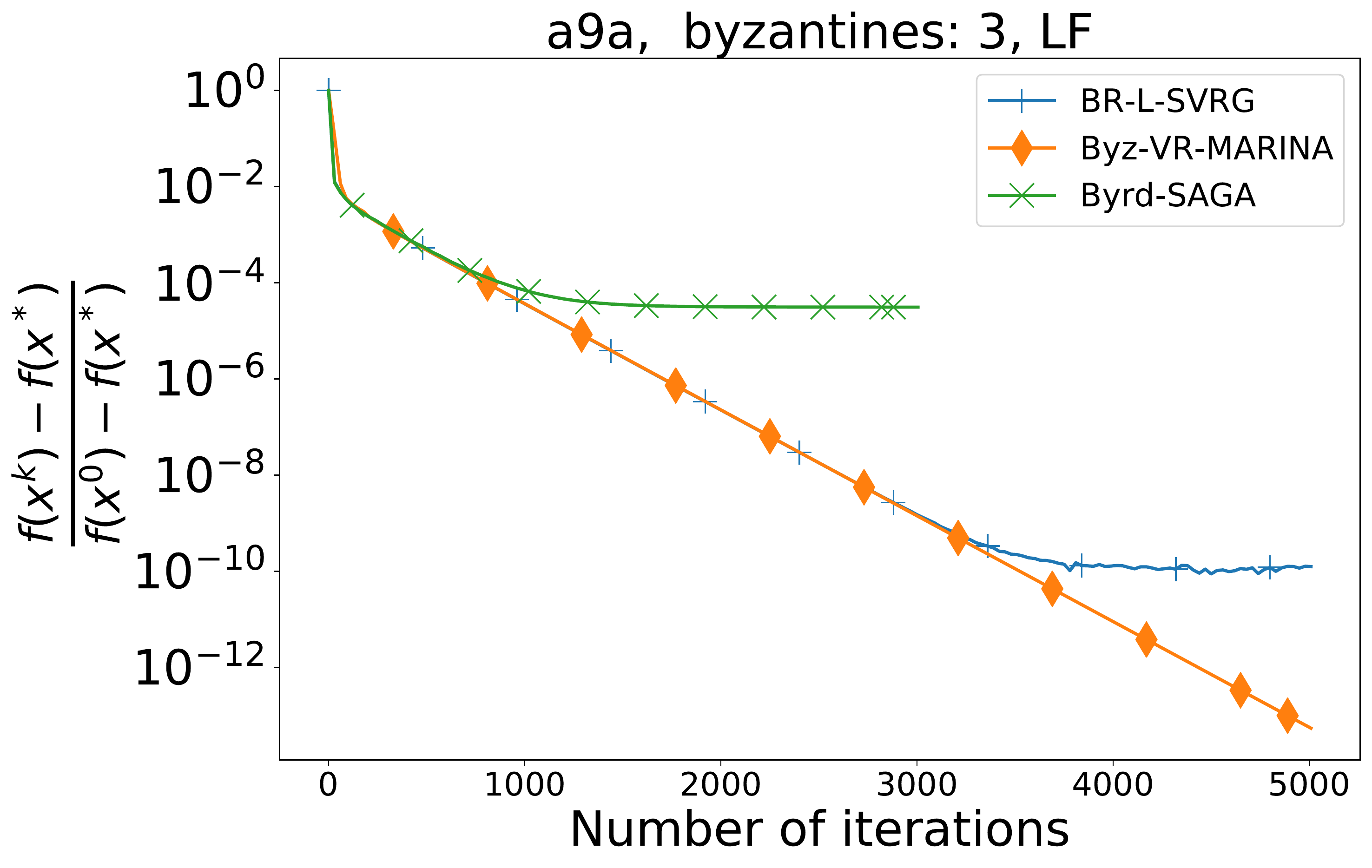}} \\
\end{minipage}
\hfill
\begin{minipage}[h]{0.24\linewidth}
\center{\includegraphics[width=1\linewidth]{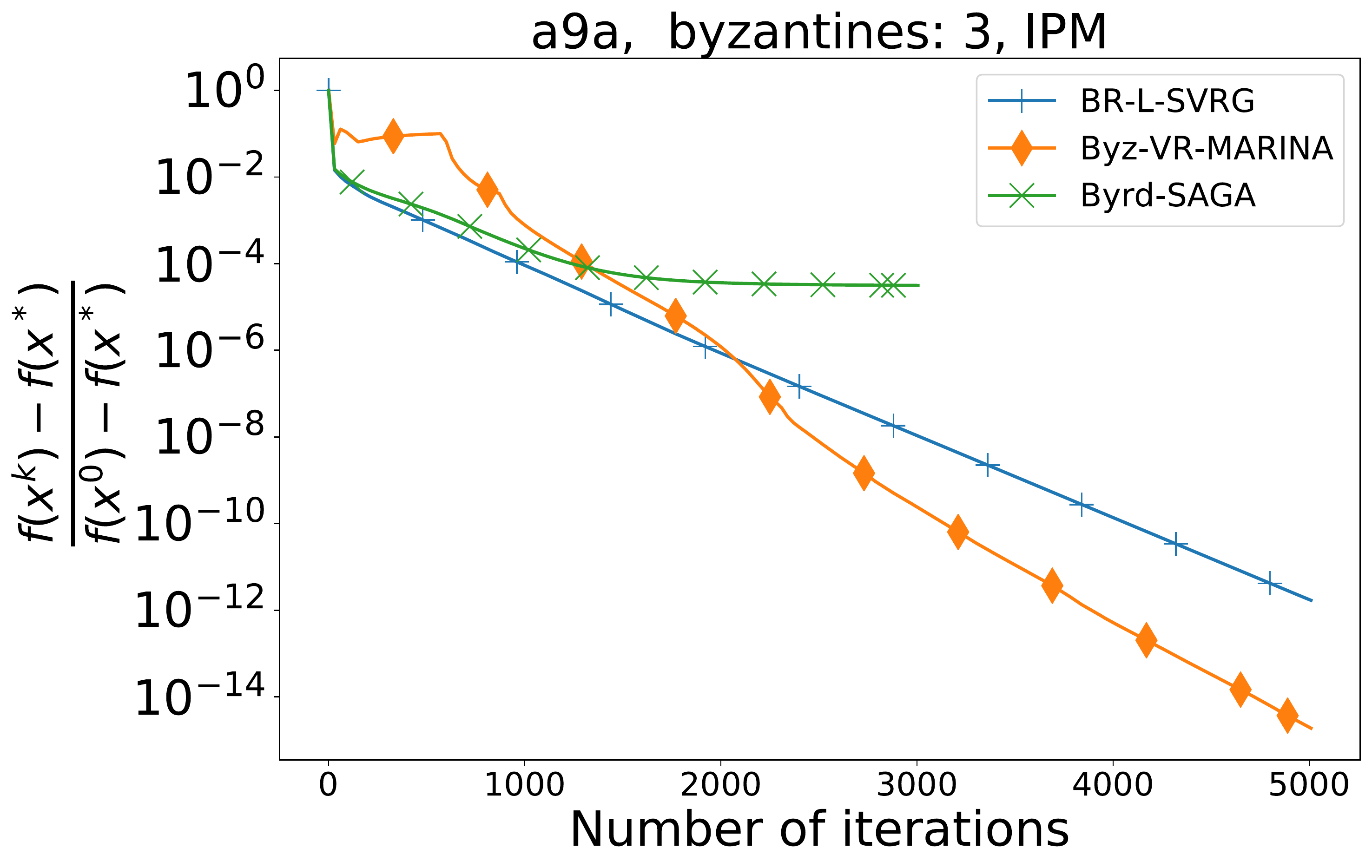}} \\
\end{minipage}
\hfill
\begin{minipage}[h]{0.24\linewidth}
\center{\includegraphics[width=1\linewidth]{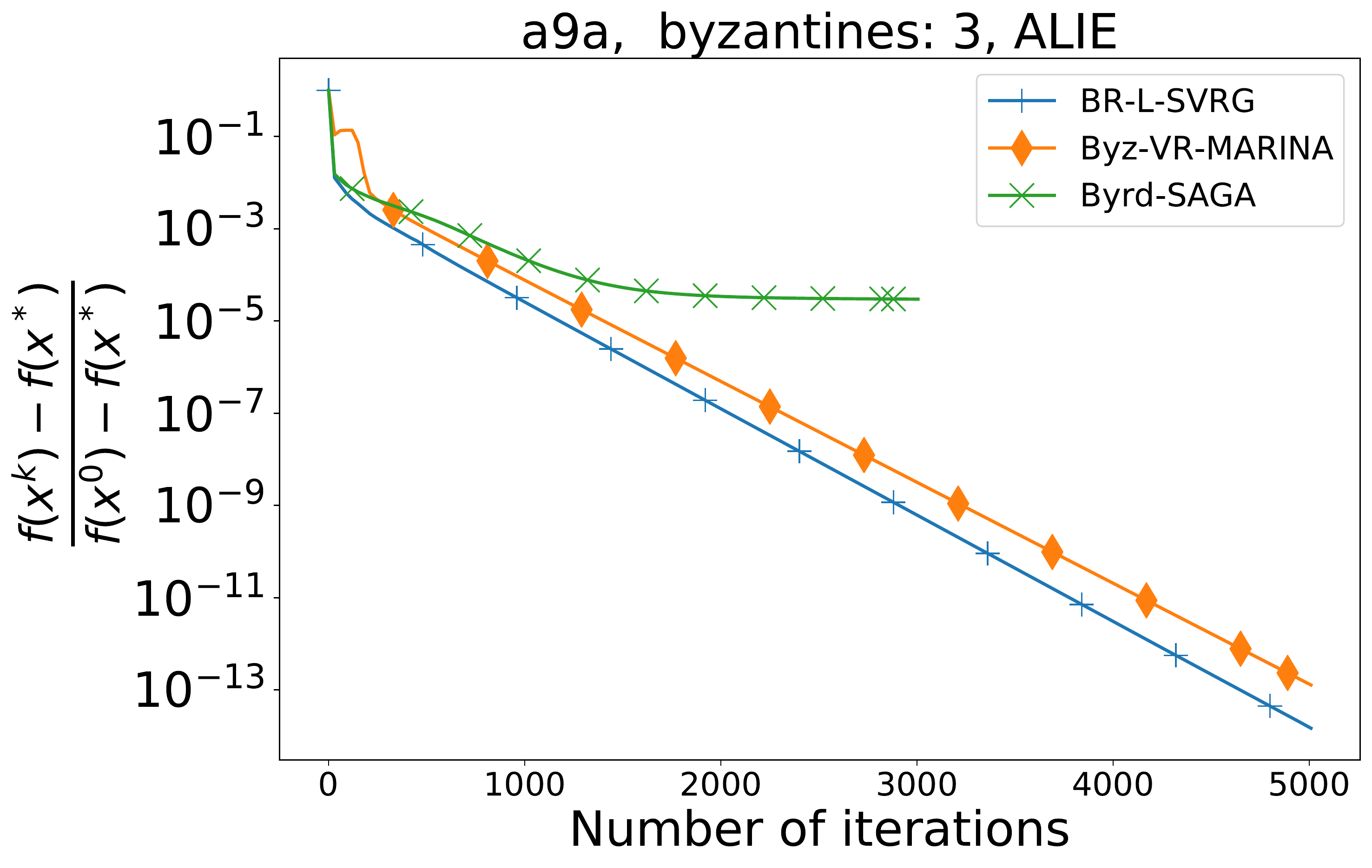}} \\
\end{minipage}
\vfill
\begin{minipage}[h]{0.24\linewidth}
\center{\includegraphics[width=1\linewidth]{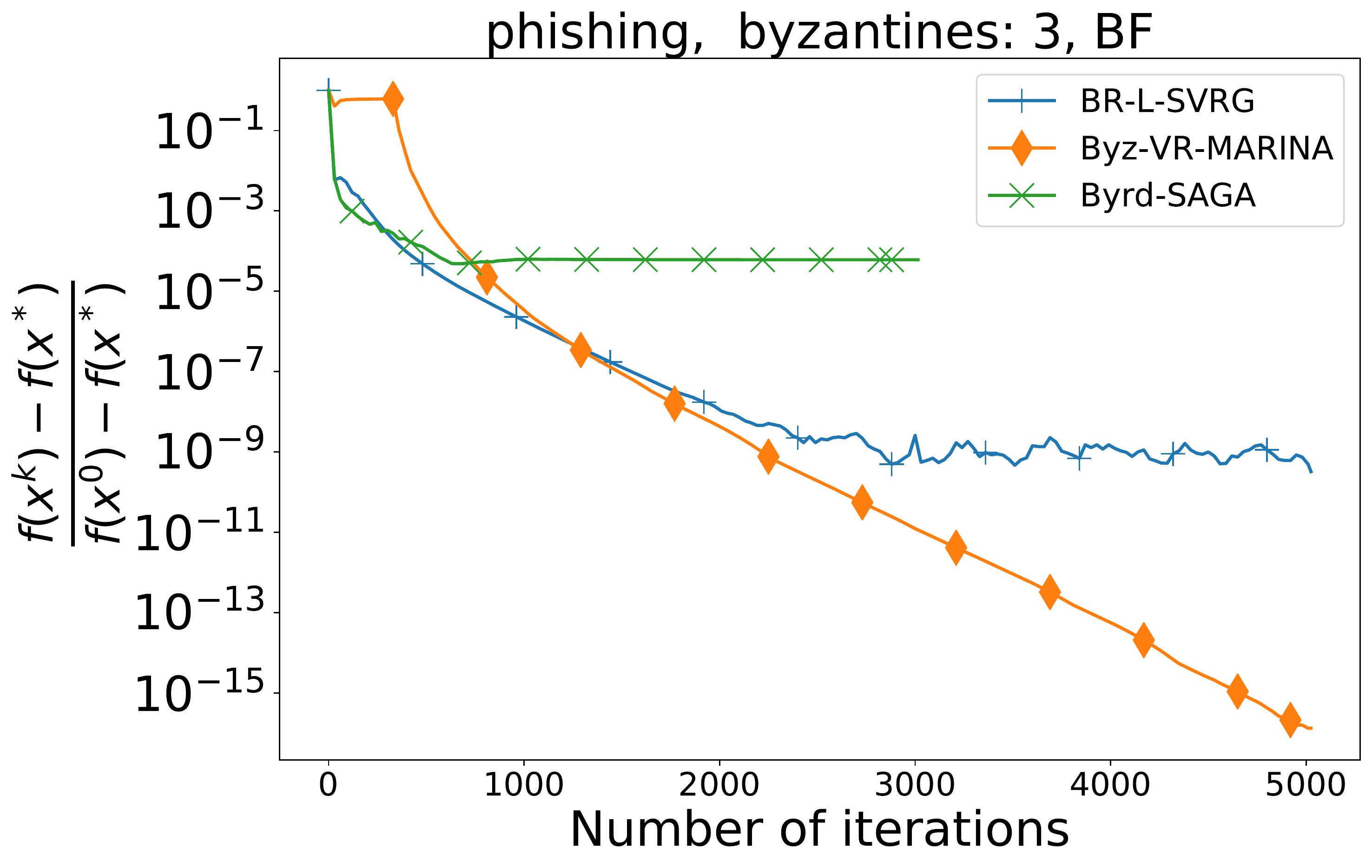}} \\
\end{minipage}
\hfill
\begin{minipage}[h]{0.24\linewidth}
\center{\includegraphics[width=1\linewidth]{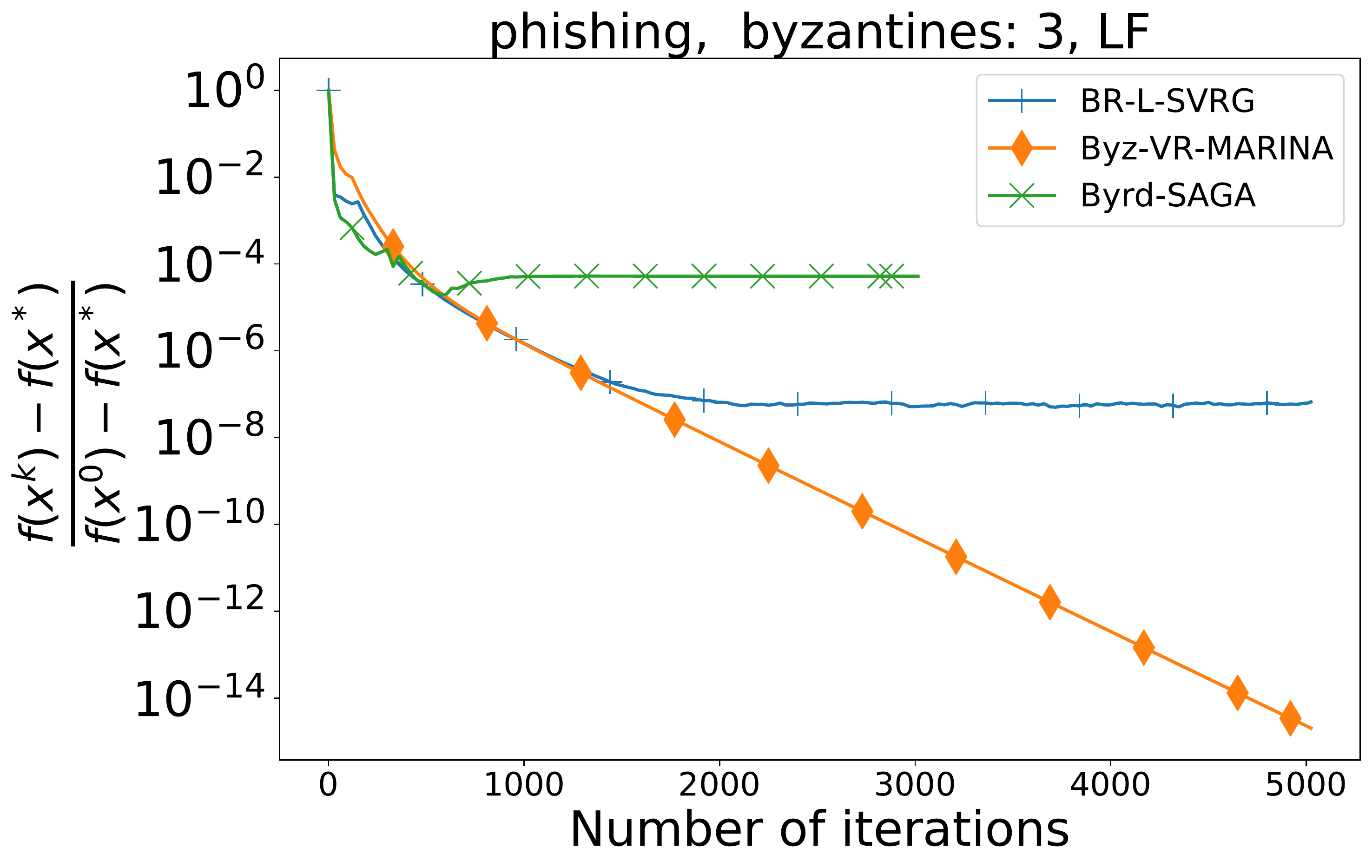}} \\
\end{minipage}
\hfill
\begin{minipage}[h]{0.24\linewidth}
\center{\includegraphics[width=1\linewidth]{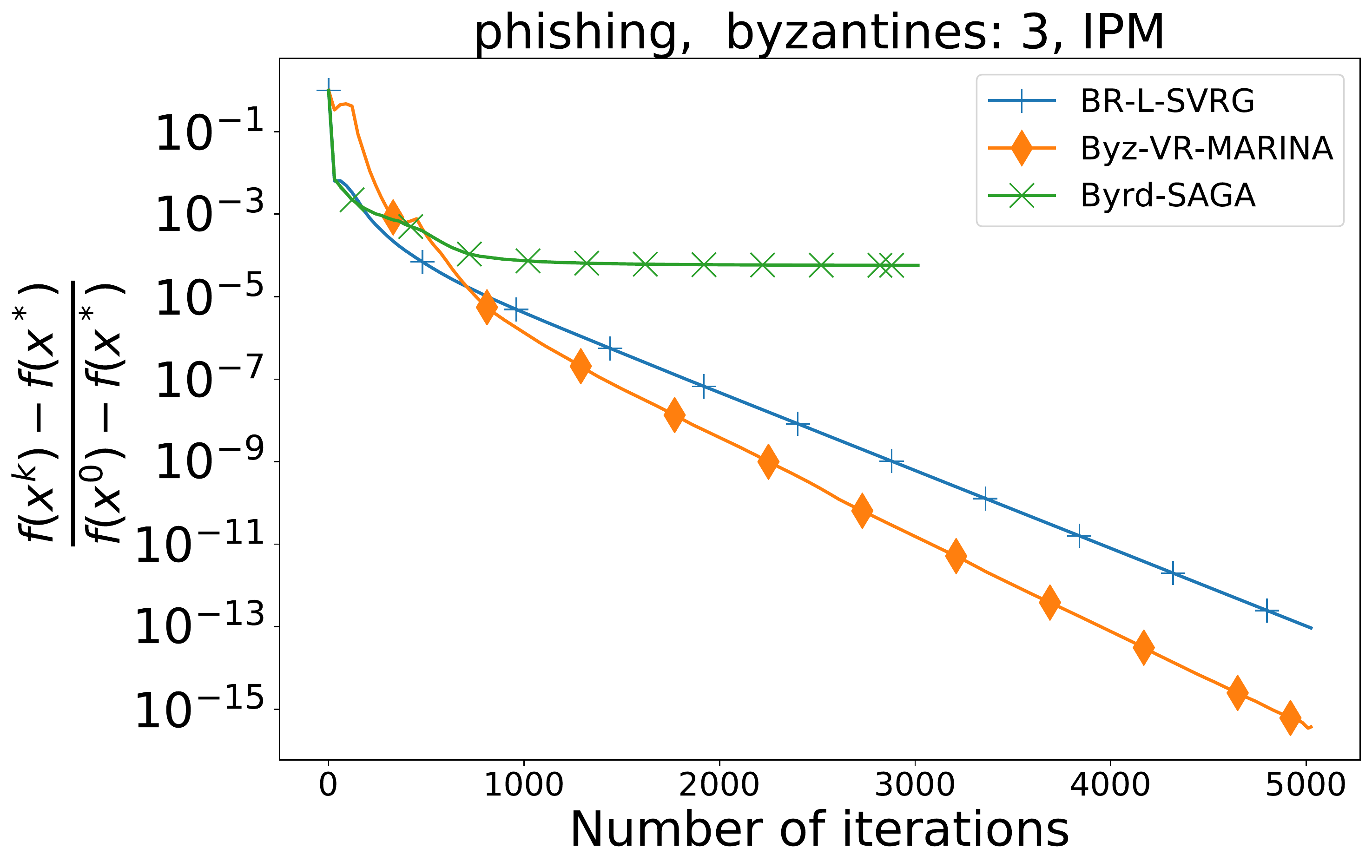}} \\
\end{minipage}
\hfill
\begin{minipage}[h]{0.24\linewidth}
\center{\includegraphics[width=1\linewidth]{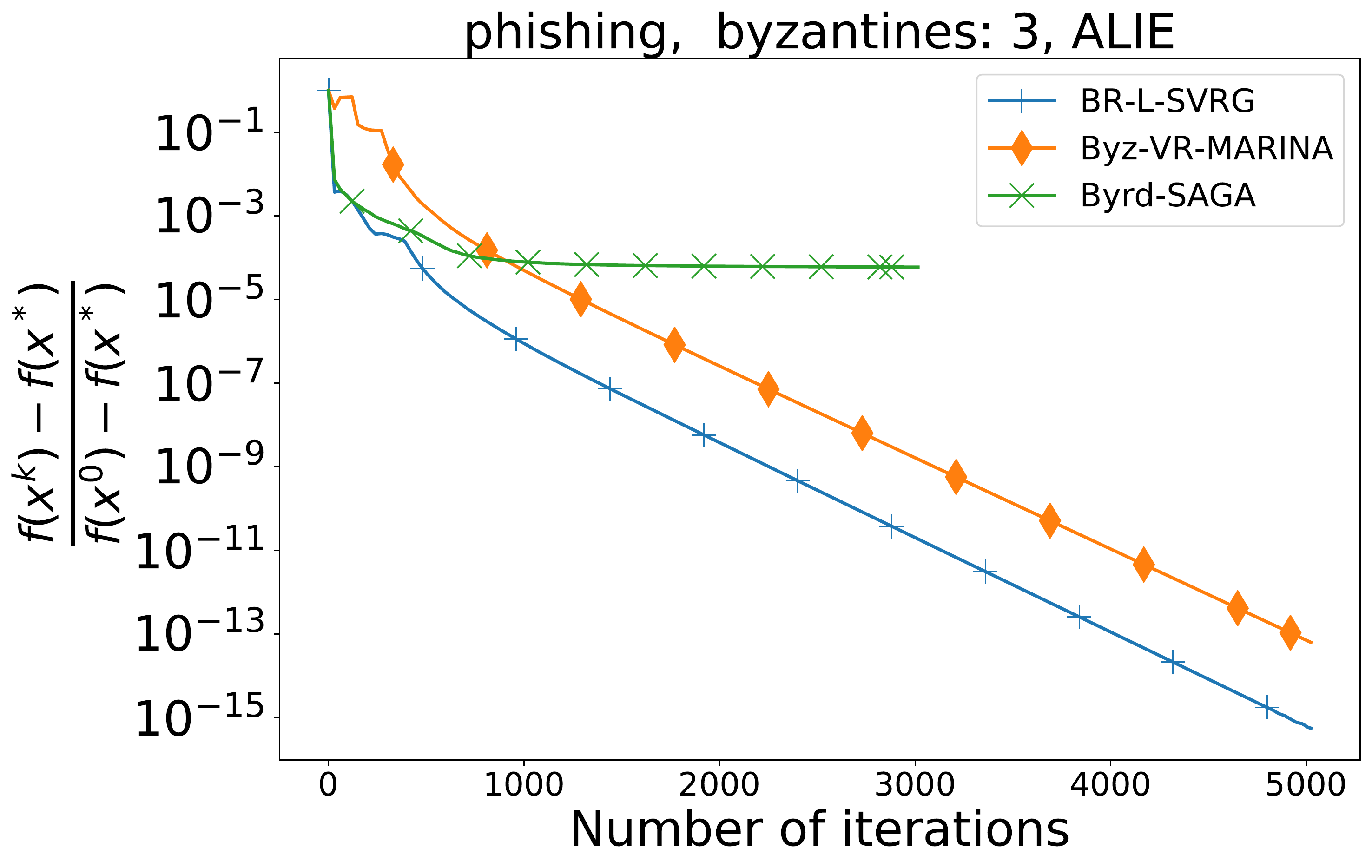}} \\
\end{minipage}
\vfill
\begin{minipage}[h]{0.24\linewidth}
\center{\includegraphics[width=1\linewidth]{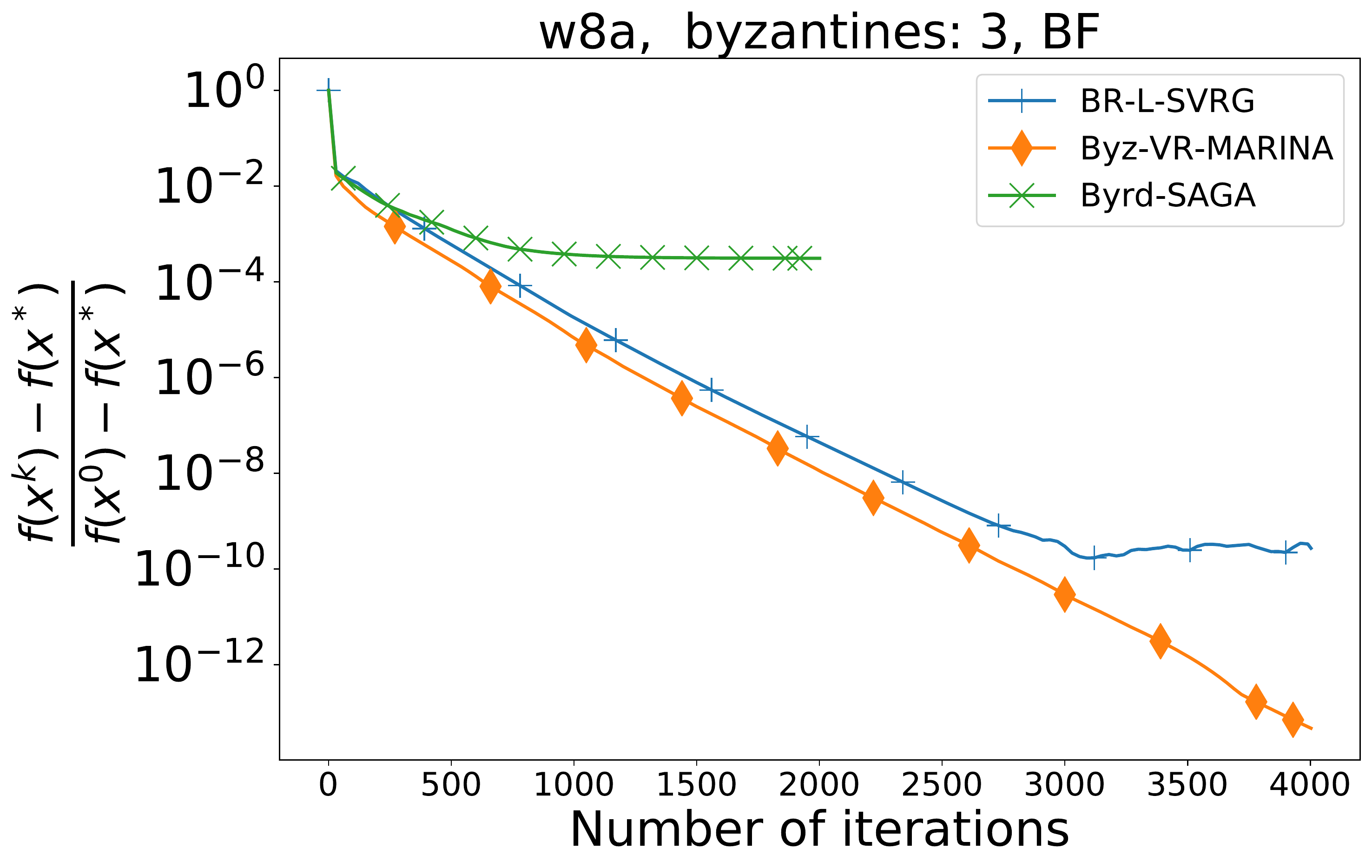}} \\
\end{minipage}
\hfill
\begin{minipage}[h]{0.24\linewidth}
\center{\includegraphics[width=1\linewidth]{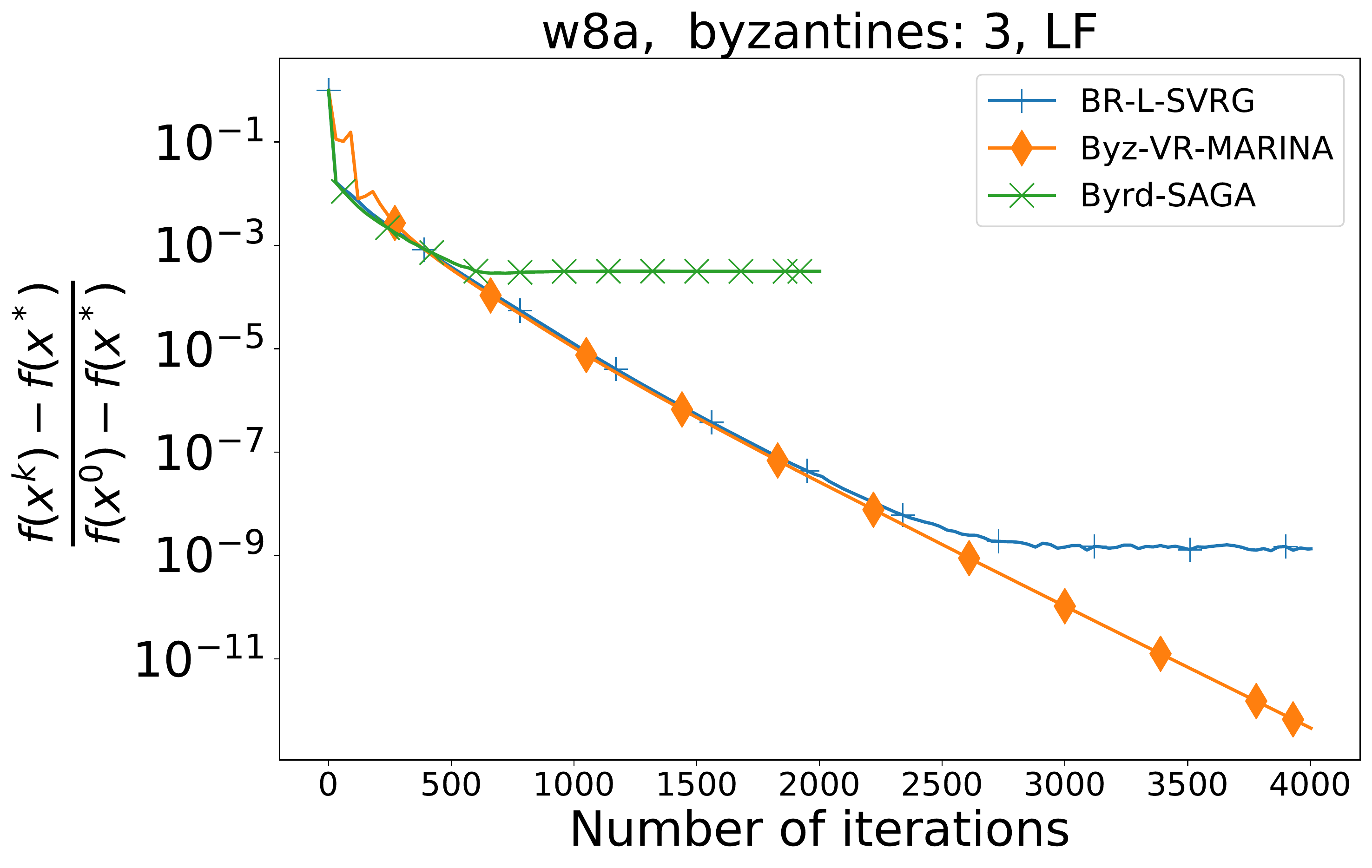}} \\
\end{minipage}
\hfill
\begin{minipage}[h]{0.24\linewidth}
\center{\includegraphics[width=1\linewidth]{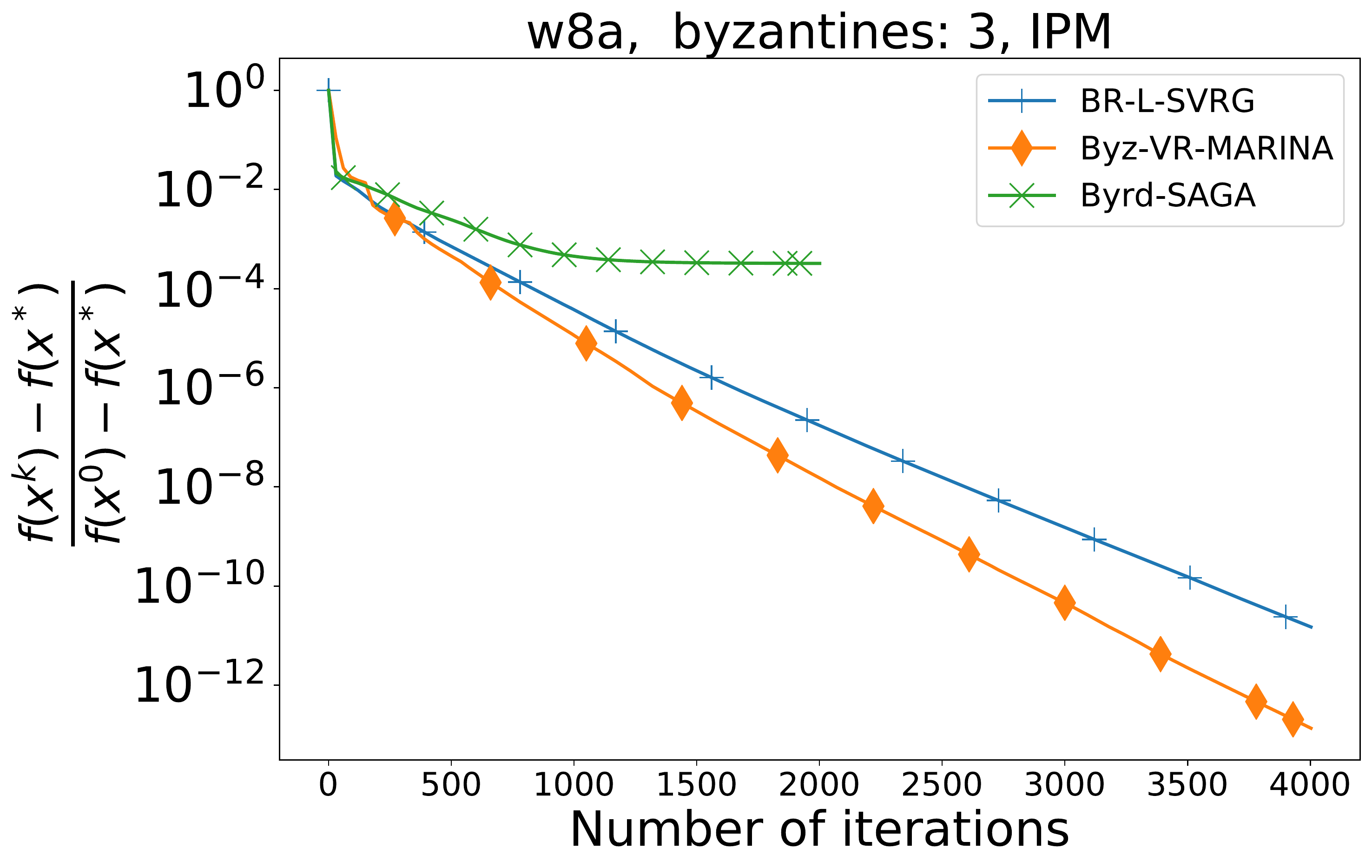}} \\
\end{minipage}
\hfill
\begin{minipage}[h]{0.24\linewidth}
\center{\includegraphics[width=1\linewidth]{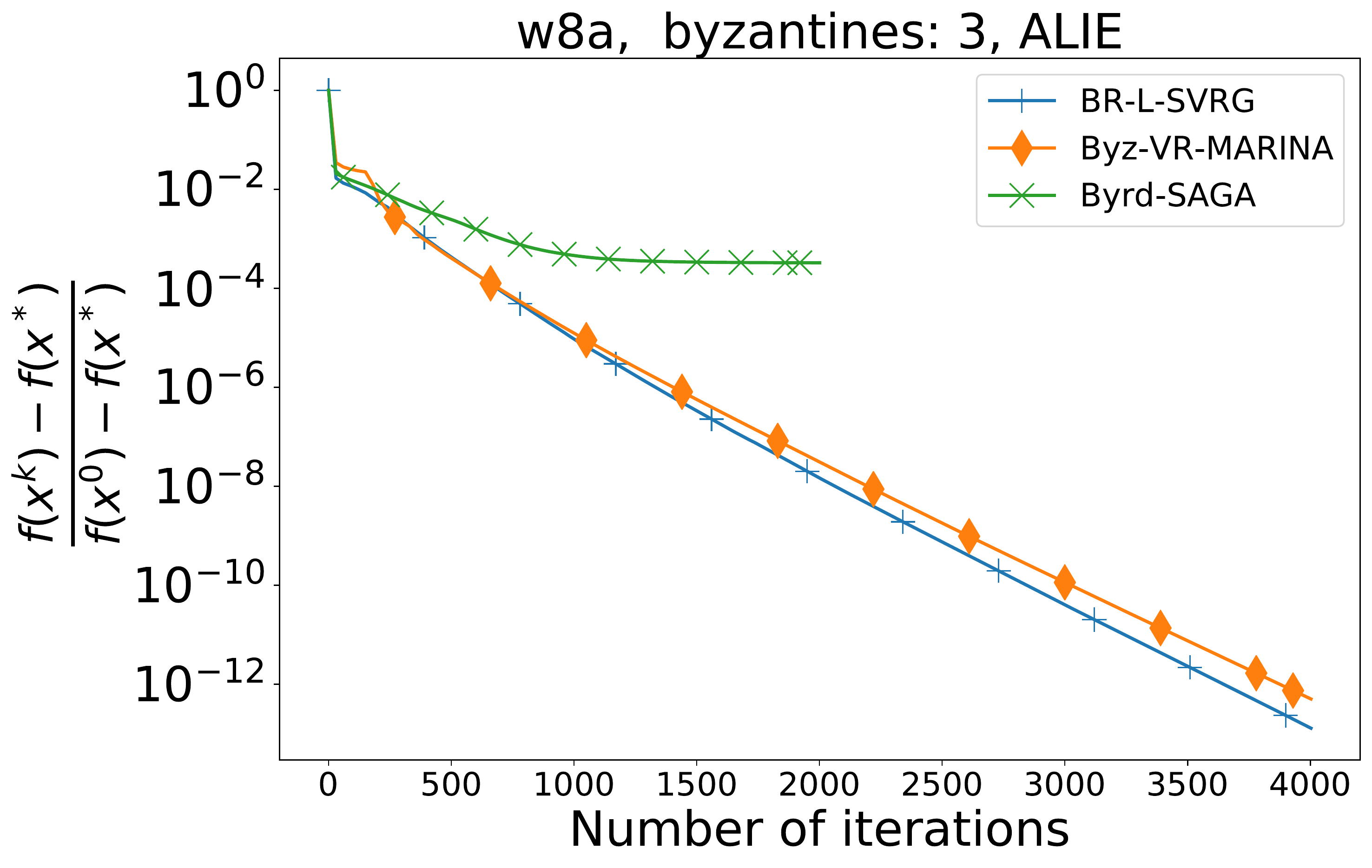}} \\
\end{minipage}
\vfill
\begin{minipage}[h]{0.24\linewidth}
\center{\includegraphics[width=1\linewidth]{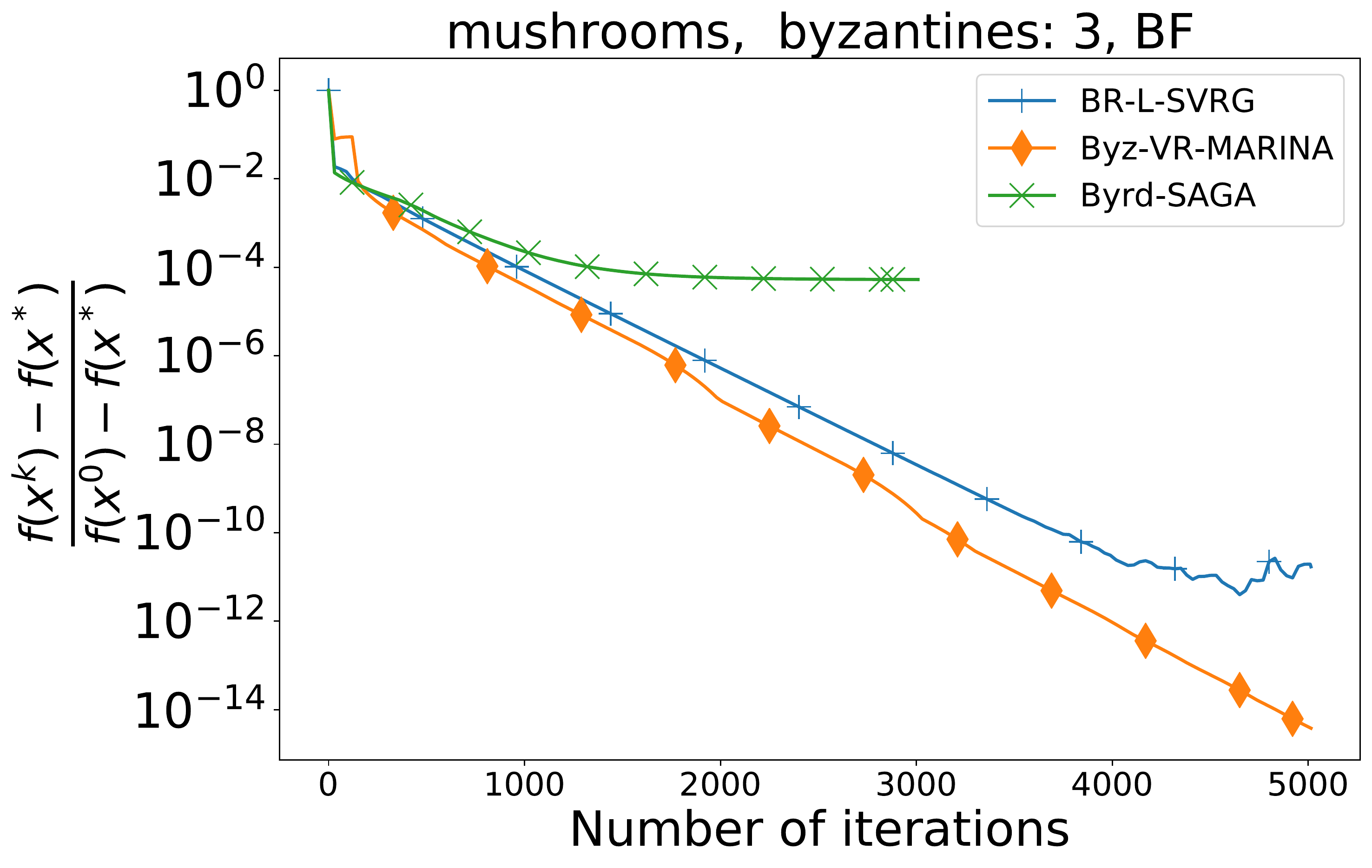}} \\
\end{minipage}
\hfill
\begin{minipage}[h]{0.24\linewidth}
\center{\includegraphics[width=1\linewidth]{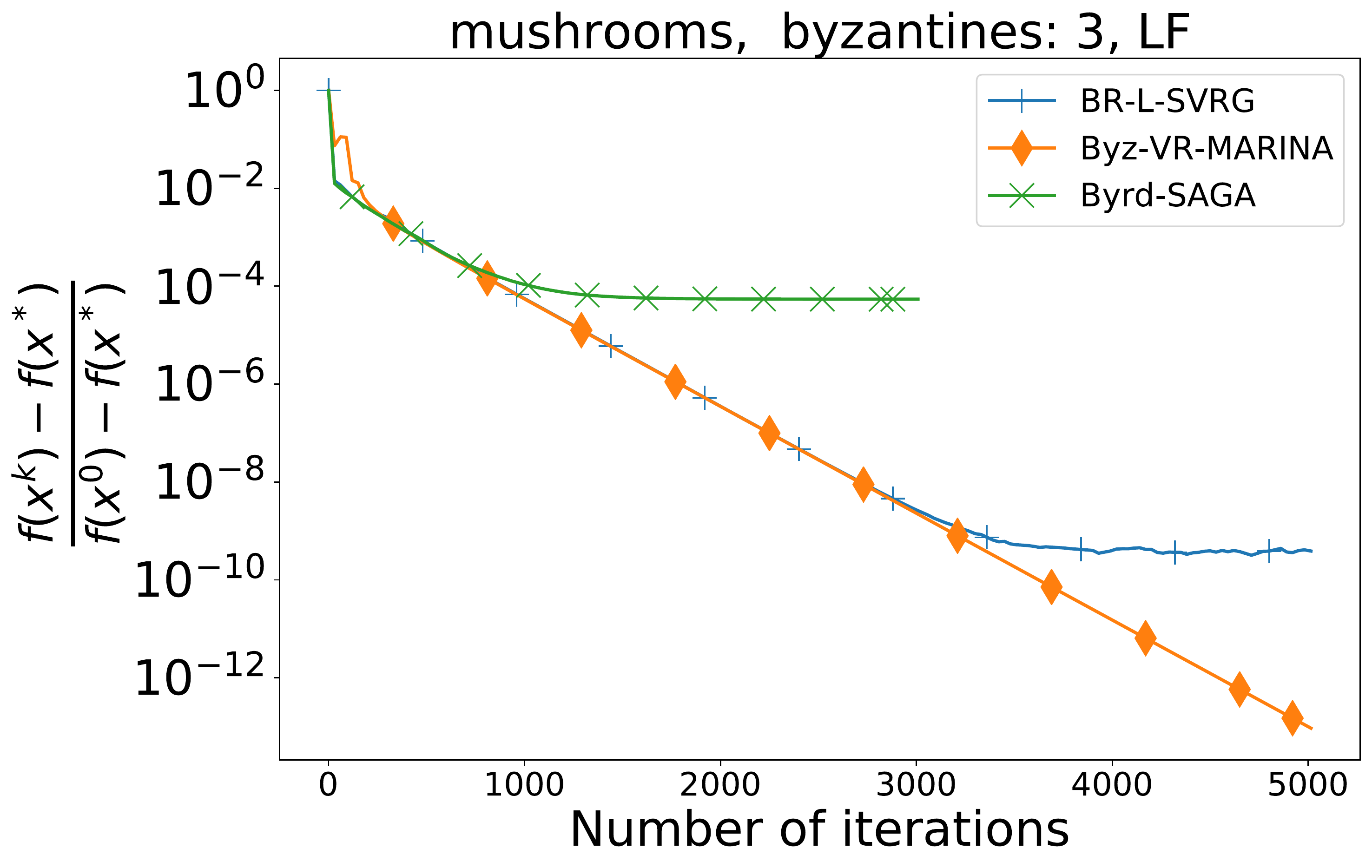}} \\
\end{minipage}
\hfill
\begin{minipage}[h]{0.24\linewidth}
\center{\includegraphics[width=1\linewidth]{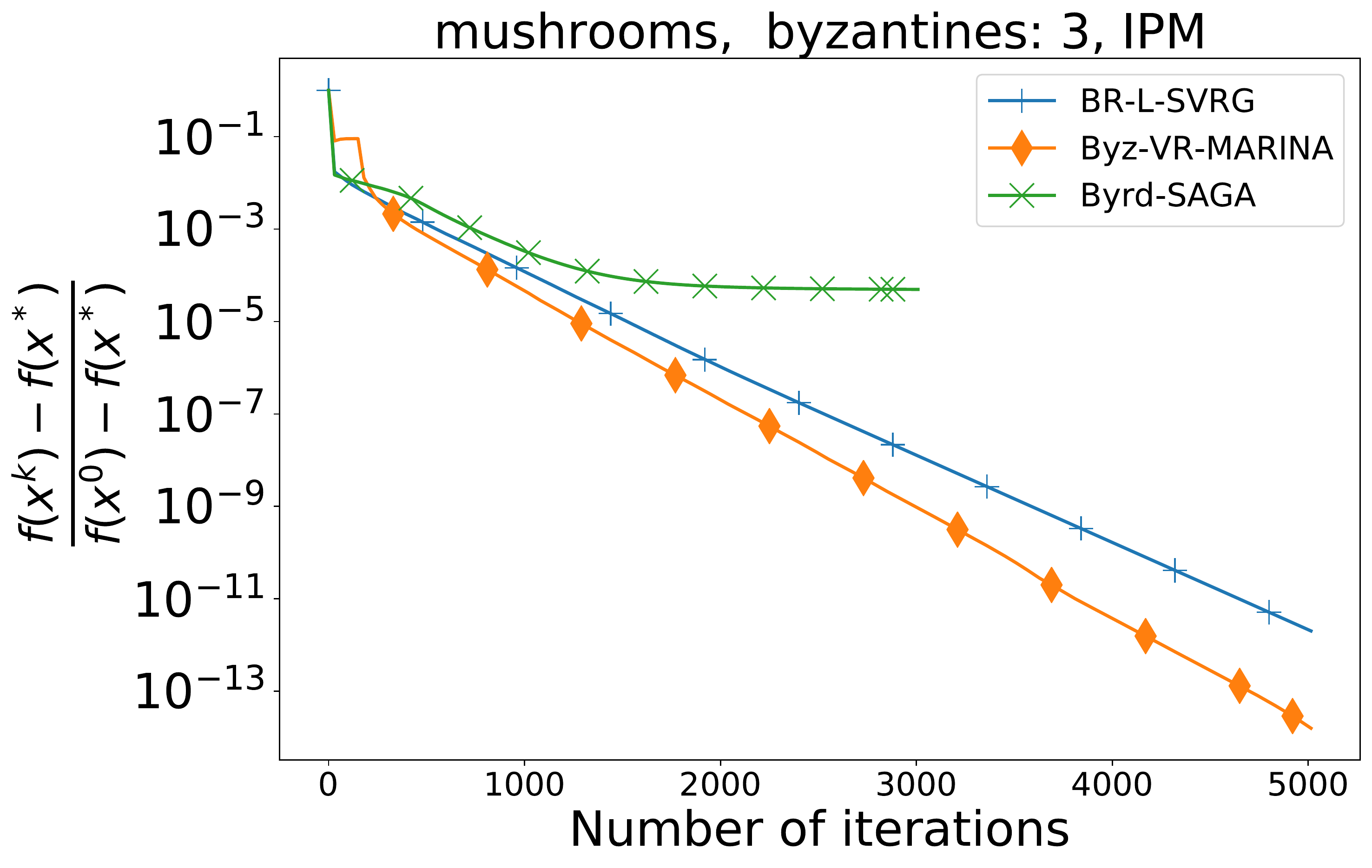}} \\
\end{minipage}
\hfill
\begin{minipage}[h]{0.24\linewidth}
\center{\includegraphics[width=1\linewidth]{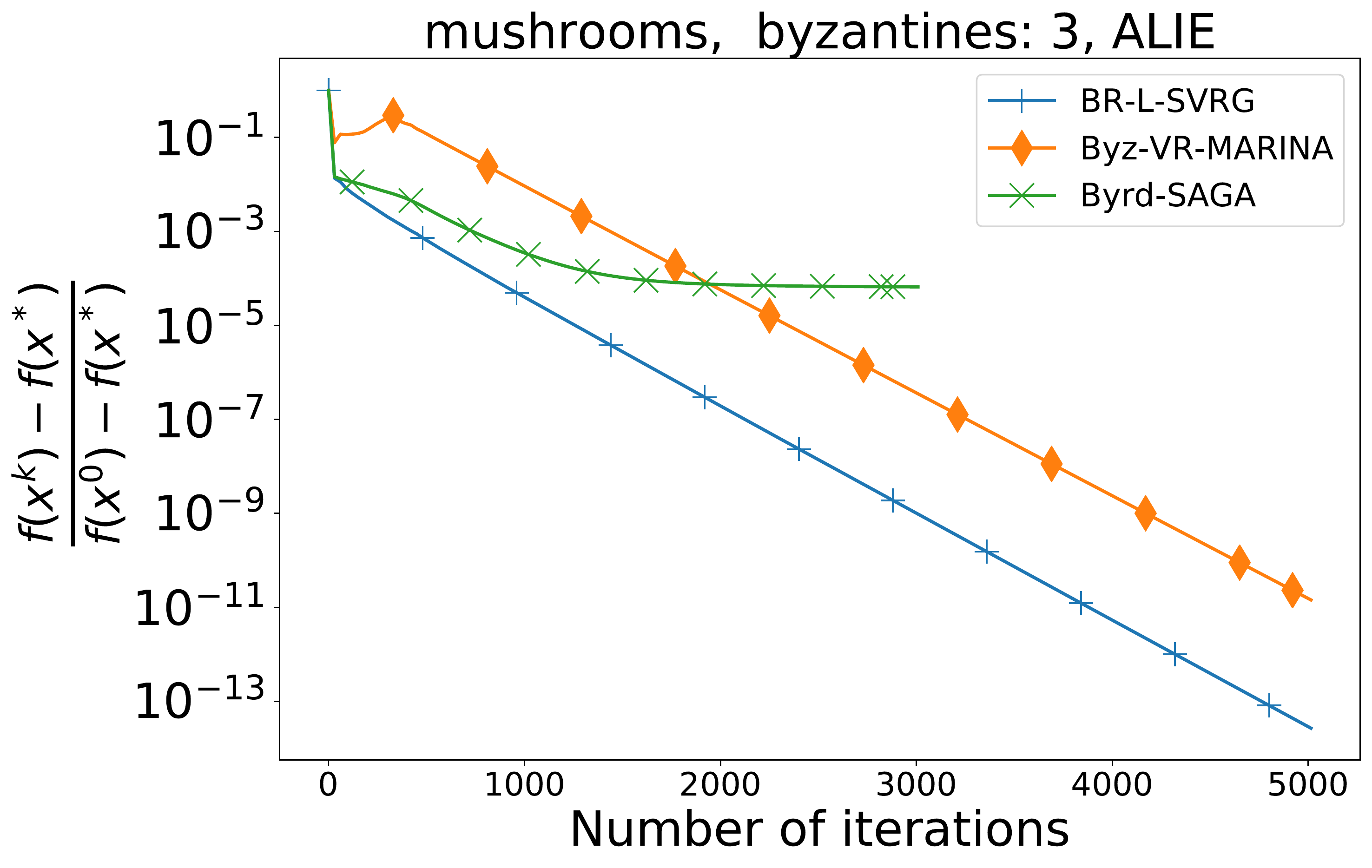}} \\
\end{minipage}
\vfill
\caption{Trajectories of \algname {BR-LSVRG}, \algname{Byz-VR-MARINA} and \algname{Byrd-SAGA} with batchsize $0.01m$. Each row corresponds to one of the used datasets with 4 different types of attacks.}
\label{second_exp}
\end{figure}

\section{Discussion}

In this work, we propose a new Byzantine-robust variance-reduced method based on \algname{SVRG}-estimator -- \algname{BR-LSVRG}. Our theoretical results show that \algname{BR-LSVRG} outperforms state-of-the-art methods in certain regimes. Numerical experiments highlight that \algname{BR-LSVRG} can have a comparable convergence to \algname{Byz-VR-MARINA}.

\bibliographystyle{splncs04}
{\bibliography{refs}}

\appendix

\section{Examples of Robust Aggregators}\label{appendix:robust_aggregation}

In \cite{karimireddy2021byzantine}, the authors propose the procedure called \emph{bucketing} (see Algorithm~\ref{alg:bucketing}) that robustifies certain aggregation rules such as: \textbullet~ geometric median (\texttt{GM}) $\hat x = \arg\min_{x\in\R^d}\sum_{i=1}^n \|x - x_i\|$; \textbullet~ coordinate-wise median (\texttt{CM}) $\hat x = \arg\min_{x\in\R^d}\sum_{i=1}^n \|x - x_i\|_1$; \textbullet~ \texttt{Krum} estimator \cite{blanchard2017machine} $\arg\min_{x_i \in \{x_1, \ldots, x_n\}} \sum_{j \in S_i} \|x_j - x_i\|^2$, where $S_i \subseteq \{x_1, \ldots, x_n\}$ is the subset of $n - |\cB| - 2$ closest (w.r.t.\ $\ell_2$-norm) vectors to $x_i$. 
\begin{algorithm}[h]
   \caption{\texttt{Bucketing}: Robust Aggregation using bucketing \cite{karimireddy2021byzantine}}\label{alg:bucketing}
\begin{algorithmic}[1]
   \STATE {\bfseries Input:} $\{x_1,\ldots,x_n\}$, $s \in \NN$ -- bucket size, \texttt{Aggr} -- aggregation rule 
   \STATE Sample random permutation $\pi = (\pi(1),\ldots, \pi(n))$ of $[n]$
   \STATE Compute $y_i = \frac{1}{s}\sum_{k = s(i-1)+1}^{\min\{si, n\}} x_{\pi(k)}$ for $i = 1, \ldots, \lceil \nicefrac{n}{s} \rceil$
   \STATE {\bfseries Return:} $\widehat x = \texttt{Aggr}(y_1, \ldots, y_{\lceil \nicefrac{n}{s} \rceil})$
\end{algorithmic}
\end{algorithm}

The following result establishes the robustness of the aforementioned aggregation rules in combination with \texttt{Bucketing}.

\begin{theorem}[Theorem D.1 from \cite{gorbunov2022variance}]
    Assume that $\{x_1,x_2,\ldots,x_n\}$ is such that there exists a subset $\cG \subseteq [n]$, $|\cG| = G \geq (1-\delta)n$ and $\sigma \ge 0$ such that $\frac{1}{G(G-1)}\sum_{i,l \in \cG}\EE\|x_i - x_l\|^2 \leq \sigma^2$. Assume that $\delta \leq \delta_{\max}$. If Algorithm~\ref{alg:bucketing} is run with $s = \lfloor \nicefrac{\delta_{\max}}{\delta} \rfloor$, then
	\begin{itemize}
	\item[\textbullet] \texttt{GM $\circ$ Bucketing} satisfies Definition~\ref{def:robust_aggr} with $c = \cO(1)$ and $\delta_{\max} < \nicefrac{1}{2}$,
	\item[\textbullet] \texttt{CM $\circ$ Bucketing} satisfies Definition~\ref{def:robust_aggr} with $c = \cO(d)$ and $\delta_{\max} < \nicefrac{1}{2}$,
        \item[\textbullet] \texttt{Krum $\circ$ Bucketing} satisfies Definition~\ref{def:robust_aggr} with $c = \cO(1)$ and $\delta_{\max} < \nicefrac{1}{4}$.
	\end{itemize}
\end{theorem}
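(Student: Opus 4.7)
The plan is to combine two independent effects: a variance-reduction gain from averaging within each bucket, and the classical outlier-robustness guarantees of \texttt{GM}, \texttt{CM}, and \texttt{Krum} applied downstream to the $N := \lceil n/s \rceil$ bucket averages. The choice $s = \lfloor \delta_{\max}/\delta \rfloor$ is precisely the one that balances these two effects.

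First, I would analyze the bucketing step. Call a bucket \emph{clean} if it contains no element of $\cB$, and let $\widetilde\cG \subseteq [N]$ denote the set of clean bucket indices. Since each Byzantine worker can spoil at most one bucket, we have deterministically $|\widetilde\cG| \geq N - |\cB|$, so the fraction of dirty buckets is at most $|\cB|/N \leq \delta n / (n/s) = \delta s \leq \delta_{\max}$. Thus, from the perspective of the downstream aggregator, the effective Byzantine fraction lies below the tolerance threshold of each rule.

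Second, I would quantify the variance of the clean bucket averages. Conditional on the partition into buckets, each clean $y_i$ is the mean of $s$ distinct regular vectors drawn without replacement by the uniform permutation $\pi$. Using the equivalence between the pairwise-variance assumption $\tfrac{1}{G(G-1)}\sum_{i,l\in\cG}\EE\|x_i-x_l\|^2 \leq \sigma^2$ and an upper bound on $\tfrac{1}{G}\sum_{i\in\cG}\EE\|x_i-\overline{x}\|^2$, together with the standard variance formula for sampling without replacement, a direct calculation gives
\begin{equation*}
\widetilde\sigma^2 \;\eqdef\; \frac{1}{|\widetilde\cG|(|\widetilde\cG|-1)}\sum_{i,l\in\widetilde\cG}\EE\|y_i - y_l\|^2 \;\leq\; \frac{C\sigma^2}{s}
\end{equation*}
for an absolute constant $C$. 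Third, I would invoke the known one-step robustness of each base aggregator on the bucketed inputs: \texttt{GM} admits a Minsker-type bound $\EE\|\texttt{GM}(y_1,\ldots,y_N) - \overline{y}'\|^2 = \cO(\delta_{\max} \widetilde\sigma^2)$ valid for $\delta_{\max} < \nicefrac{1}{2}$, where $\overline{y}'$ is the mean of the clean buckets; \texttt{CM} satisfies the same with an extra $d$ factor coming from the coordinate-wise $\ell_1$ minimization; \texttt{Krum} satisfies the analog bound with a universal constant but needs $\delta_{\max} < \nicefrac{1}{4}$ to guarantee that the nearest-$(N-|\cB|-2)$ neighborhood of any clean bucket is still dominated by clean buckets.

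Finally, I would bridge $\overline{y}'$ to $\overline{x} = \tfrac{1}{G}\sum_{i\in\cG} x_i$. Conditional on the random partition, $\overline{y}'$ equals the average over the regular workers that landed in clean buckets, and by exchangeability of the permutation $\EE[\overline{y}'] = \overline{x}$. A second sampling-without-replacement variance computation yields $\EE\|\overline{y}' - \overline{x}\|^2 = \cO(\delta\sigma^2)$. Combining via $\|\widehat x - \overline x\|^2 \leq 2\|\widehat x - \overline{y}'\|^2 + 2\|\overline{y}' - \overline x\|^2$ and using $\delta_{\max}\widetilde\sigma^2 = \cO(\delta_{\max} \sigma^2/s) = \cO(\delta\sigma^2)$ gives the desired bound $\EE\|\widehat x - \overline x\|^2 \leq c\delta\sigma^2$ with the stated constants for each of the three aggregators.

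The main obstacle is the bucket-variance bound in the second step. The event that a given bucket is clean is correlated across buckets, and the vectors sitting in a particular bucket are not i.i.d.\ samples from $\cG$ either; one has to condition carefully on the permutation structure and exploit the exchangeability of regular workers to reduce the computation to a standard without-replacement variance identity. For \texttt{Krum} there is an additional subtlety: one must check that the $\nicefrac{1}{4}$ threshold is sufficient for the selected bucket to be clean on expectation, which is why the tolerated $\delta_{\max}$ is halved compared to \texttt{GM} and \texttt{CM}.
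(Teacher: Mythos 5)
This theorem is imported verbatim from the cited reference (Theorem D.1 of \cite{gorbunov2022variance}, which in turn builds on the bucketing analysis of \cite{karimireddy2021byzantine}); the present paper states it without proof, so there is no in-paper argument to compare against. Measured against the proof in those references, your plan is the right one and follows essentially the same strategy: bucketing with $s = \lfloor \nicefrac{\delta_{\max}}{\delta} \rfloor$ dilutes the Byzantine fraction among buckets to at most $\delta s \leq \delta_{\max}$ while shrinking the pairwise variance of clean bucket means by a factor of $s$ via a sampling-without-replacement computation, after which the classical one-shot guarantees of \texttt{GM}, \texttt{CM} (with the extra $d$ from coordinate-wise aggregation), and \texttt{Krum} (with the tighter $\delta_{\max} < \nicefrac{1}{4}$ threshold) are applied to the bucketed inputs; the two effects cancel to give $c\delta\sigma^2$. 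You also correctly identify the conversion between the pairwise-variance condition of Definition~\ref{def:robust_aggr} and the variance about $\overline{x}$, and you are honest that the bucket contents are neither independent nor i.i.d., which is where the real bookkeeping lives.

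One step deserves a warning: your bridge $\EE\|\overline{y}' - \overline{x}\|^2 = \cO(\delta\sigma^2)$ is \emph{not} obtainable by a worst-case count of the regular vectors discarded into dirty buckets. Up to $(s-1)B \approx \delta_{\max} n$ regular vectors can be excluded, and a deterministic Cauchy--Schwarz bound on their contribution only yields $\cO(\delta_{\max}\sigma^2)$, which loses exactly the factor $s$ you worked to gain. You must use the exchangeability of the random permutation so that the excluded regular vectors form (conditionally) a uniformly random subset, and then a second without-replacement variance identity recovers the extra $\nicefrac{1}{G}$ factor; one also needs the trivial observation that $\delta < \nicefrac{1}{n}$ forces $B = 0$. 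Since you explicitly invoke exchangeability at this point, the plan goes through, but a reader executing it naively would stall here. (The cited proofs sidestep this by bounding $\EE\|\widehat{x} - \overline{x}\|^2$ directly rather than routing through the clean-bucket mean $\overline{y}'$, which is a mild organizational difference, not a different method.)
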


\end{document}